\newtheorem{lem}{Lemma}[section]
\newtheorem{eg}[lem]{Example}
\newtheorem{prop}[lem]{Proposition}
\newtheorem{cor}[lem]{Corollary}
\newtheorem{thm}[lem]{Theorem}
\theoremstyle{definition}
\newtheorem{ques}[lem]{Question}
\newtheorem{quesc}[lem]{\llap{*}Question}
\theoremstyle{remark}
\newtheorem{rk}[lem]{Remark}
\newtheorem{rkc}[lem]{\llap{*}Remark}
\newenvironment{proofc}{\begin{proof}[\llap{*}Proof]}{\end{proof}}
\let\Re\relax\DeclareMathOperator{\Re}{Re}
\let\Im\relax\DeclareMathOperator{\Im}{Im}
\DeclareMathOperator{\spn}{span}
\DeclareMathOperator{\tgm}{tgm}
\DeclareMathOperator{\conv}{conv}
\DeclareMathOperator{\trace}{trace}
\let\epsilon\varepsilon\let\phi\varphi
\newenvironment{smallbmatrix}{\big[\begin{smallmatrix}}{\end{smallmatrix}\big]}
\newenvironment{smallvect}[1][\Bigg]%
{\let\privatefoo#1\privatefoo[\begin{smallmatrix}}%
  {\end{smallmatrix}\privatefoo]}
\newcommand{\Wme}{W_{\!\mathrm{m},\mathrm{e}}}
\newcommand{\Wm}{W_{\!\mathrm{m}}}
\newcommand{\svdots}{\\[-.8ex]\vdots\\[.4ex]}
\let\sectmark\S
\def\dc#1{\expandafter\def\csname#1\endcsname{\mathcal{#1}}}
\def\db#1{\expandafter\def\csname b#1\endcsname{\mathbb{#1}}}
\def\loopy#1#2{%
  \def#1##1{\def\next{#2{##1}#1}\ifx##1\relax\let\next\relax\fi\next}}
\loopy{\makemathcals}{\dc}\loopy{\makemathbbs}{\db}
\def\@modulo#1#2{\@DT@modctr=#1\relax\divide \@DT@modctr by #2\relax
\multiply \@DT@modctr by #2\relax\advance #1 by -\@DT@modctr}
\newcommand{\xxivtime}{\@dtctr=\time\divide\@dtctr by 60
\ifnum\@dtctr<10 0\fi\the\@dtctr:\@dtctr=\time\@modulo{\@dtctr}{60}%
\ifnum\@dtctr<10 0\fi\the\@dtctr}
\newcommand{\hsnorm}[1]{%
  \left\vert\kern-0.9pt\left\vert\kern-0.9pt\left\vert #1
      \right\vert\kern-0.9pt\right\vert\kern-0.9pt\right\vert}
\begin{document}

\title[CB norms of right module maps]{Completely bounded norms\\ of right module maps}
\author{Rupert H. Levene}
\email{rupert.levene@ucd.ie}
\address{School of Mathematics and Statistics\\University College Dublin\\Dublin 4\\Ireland}
\author{Richard M. Timoney}
\email{richardt@maths.tcd.ie}
\address{School of Mathematics\\Trinity College Dublin\\Dublin 2\\Ireland}
\maketitle

\begin{abstract}
  It is well-known that if~$T$ is a $D_m$--$D_n$ bimodule map on the
  $m\times n$ complex matrices, then~$T$ is a Schur multiplier and
  $\|T\|_{cb}=\|T\|$.  If~$n=2$ and~$T$ is merely assumed to be a
  right $D_2$-module map, then we show that
  $\|T\|_{cb}=\|T\|$. However, this property fails if $m\geq 2$ and
  $n\geq 3$. For $m\ge2$ and $n=3,4$ or $n\ge m^2$ we give examples of
  maps~$T$ attaining the supremum \[C(m,n)=\sup\{\|T\|_{cb}\colon
  \text{$T$ a right $D_{n}$-module map on~$M_{m,n}$ with
    $\|T\|\leq1$}\},\] we show that $C(m,m^2) = \sqrt{m}$ and succeed
  in finding sharp results for $C(m,n)$ in certain other cases.  As a
  consequence, if~$H$ is an infinite-dimensional Hilbert space and $D$
  is a masa in $B(H)$, then there is a bounded right $D$-module map on
  $\K(H)$ which is not completely bounded.
  \\[\smallskipamount]
  {\it Keywords:}
  completely bounded, right module map, matrix numerical range, tracial
  geometric mean, fidelity
  \\[\smallskipamount]
  {\it MSC (2010):} {46L07, 47L25, 15A60, 47A30}
\end{abstract}

\section{Introduction}

Let~$H$ be a Hilbert space, let~$\B(H)$ be the algebra of bounded
linear operators on~$H$, let~$\K(H)$ be the ideal of compact operators
and let~$D$ be a masa in~$\B(H)$. If~$T\colon\K(H)\to \K(H)$ is a
bounded $D$-bimodule map, then it is well-known that that
$\|T\|_{cb}=\|T\|$ (see~\cite{smith91,paulsen,PSS}).
While it would certainly be of use to be able to extend this to larger
natural classes than $D$-bimodule maps (generalised Schur multipliers),
in the present
paper, we consider the effect of relaxing the hypothesis of
bimodularity to one-sided modularity over~$D$.
While we establish a positive result for dimension $2$, we give
increasing bounds for higher finite dimensions 
and a negative answer for the following
question~\cite[Remark~7.10]{JLTT}:
\begin{ques}\label{ques:JLTT}
  If~$H$ is infinite-dimensional and $D$ is a masa in $\B(H)$, is
  there a constant $C>0$ such that $\|T\|_{cb}\leq C\|T\|$ for every
  bounded, left $D$-module map~$T\colon \K(H)\to \K(H)$?
\end{ques}
By symmetry, this question is unchanged if we replace ``left'' by
``right'', and this makes our notation marginally neater.  So we will
focus on right $D$-module maps.

Of course, if~$H$ is finite dimensional, then the answer to this
question is yes even if we discard the modularity condition. It then
becomes interesting to estimate the optimal constant~$C$. Hence we are
led to consider the constants
\[C(m,n)=\sup\{\|T\|_{cb} \colon \text{$T$ is a right $D_n$-module map
  on~$M_{m,n}$},\ \|T\|\leq 1\}\] where $M_{m,n}$ is the space of
$m\times n$ complex matrices and $D_n$ is the algebra of diagonal
$n\times n$ matrices.

The structure of the paper is as follows. We first establish some
notation and give some preliminary results in
Section~\ref{sec:prelim}. In Section~\ref{sec:twocol} we use the
second author's work on elementary operators to show that $C(m,2)=1$
for every $m\ge1$. Section~\ref{sec:CBnormsHS} contains some technical
results comparing the completely bounded norm to the norm arising from
the Hilbert-Schmidt norm, and these are used in
Section~\ref{sec:upperbounds} to find some upper bounds for
$C(m,n)$. In the next section we construct examples which show
that~$C(m,n)$ grows with $m,n$. This leads naturally to a
counterexample (in Corollary~\ref{cor:answer}) answering
Question~\ref{ques:JLTT}, and we are also able to determine the values
of $C(m,n)$ in some cases. Finally, in Section~\ref{sec:failures} we
briefly consider similar problems when we restrict attention to
special classes of right module maps.

In the last two sections, we
pose several unresolved questions about the behaviour of the
constants~$C(m,n)$.
\smallskip

This version of the paper incorporates several corrections to~\cite{lt-laa}, as
detailed in~\cite{lt-corr}. Items with corrections are marked with an asterisk.

\section{Preliminaries}\label{sec:prelim}

If~$X$ is a vector space, we write $\L(X)$ for the space of linear
maps $X\to X$. If $m,n\in\bN$, then $M_{m, n}(X)$ is the vector space
of~$m\times n$ matrices with entries in~$X$. We will write elements of
$M_{m, n}(X)$ as $[x_{ij}]_{1\leq i\leq m,\,1\leq j\leq n}$ or simply
$[x_{ij}]$, where each $x_{ij}$ is in~$X$. If~$T\in \L(X)$ and $m,n\in
\bN$, then the $(m,n)$-ampliation of~$T$ is the map $T_{m,n}\in
\L(M_{m, n}(X))$ given by $T_{m,n}[x_{ij}]=[Tx_{ij}]$. We also write
$T_n=T_{n,n}$.

Given a norm~$\|\cdot\|$ on~$X$, the corresponding operator norm, or
simply the norm, of a map $T\in \L(X)$ is
\[ \|T\|=\sup\{\|Tx\|\colon x\in X,\ \|x\|\leq 1\}.\] If we are given
norms on $M_{m,n}(X)$ for all $m,n\in \bN$, then the completely bounded
norm of~$T$ is
\[\|T\|_{cb}=\sup_{m,n\ge1}\|T_{m,n}\|.\] Provided the inclusions of
$M_{m,n}(X)$ into $M_{m+1,n}(X)$ and $M_{m,n+1}(X)$ which pad a matrix
with an extra row or column of zeros are isometries, we have
\[\|T\|=\|T_1\|\leq\|T_2\|\leq\|T_3\|\leq\dots\leq
\|T\|_{cb}=\sup_{n\ge1}\|T_n\|.\]

For $n\in \bN$ we let~$\bC^n$ denote the Hilbert space of
dimension~$n$ whose elements are to be thought of as column vectors
with $n$ complex entries, with the $\ell^2$ norm, and we will also
write $\bC^\infty$ for $\ell^2(\bN)$. For $m,n\in \bN\cup\{\infty\}$,
we write \[M_{m, n}=\B(\bC^n,\bC^m)=\{ x\in\L(\bC^n,\bC^m) \colon
\|x\|<\infty\}\] and $M_m=M_{m, m}$. If~$s,t\in\bN$, then
$M_{s,t}(M_{m, n})$ can be naturally identified with the normed vector
space~$M_{sm, tn}$, and hence inherits the norm from the latter
space. Adding a row or column of zeros is then an isometry.

If~$v,w\in \bC^n$, then $vw^*$ denotes the rank one operator in~$M_n$
given by \[vw^*(x)=\langle x,w\rangle v\quad\text{for $x\in \bC^n$.}\]
For~$1\leq i\leq n$ (or for $i\ge1$, if $n=\infty$) we write $e_i$ for
the $i$th standard basis vector in $\bC^n$. Then $D_n$, the diagonal
masa of~$M_n$, is the von Neumann algebra generated by the diagonal
matrix units~$e_ie_i^*$.

Let~$n\in\bN\cup \{\infty\}$, let~$b_1,\dots,b_\ell\in M_n$ and let
$b=\begin{smallvect} b_1\svdots b_\ell
\end{smallvect}$.  For $\xi\in \bC^n$, let $Q(b,\xi)$ be the
positive semi-definite $\ell\times \ell$
matrix \[Q(b,\xi)=[\langle b_i\xi,b_j\xi\rangle]_{1\leq i,j\leq
  \ell}.\] We recall the definitions from~\cite{timoney03} of
\textit{the matrix numerical range of\/ $b$},
\[\Wm(b)= \{ Q(b,\xi) \colon \xi\in\bC^n,\ \|\xi\|=1\}\] and
\emph{the matrix extremal numerical range of\/~$b$},
\[ \Wme(b)=\{ \beta \in \overline{\Wm(b)} \colon
\trace(\beta)=\|b\|^2\},\] (where the norm $\|b\|$ is computed with
respect to the norm on $M_{\ell,1}(M_n)$ described above). It is easy
to see that $\Wme(b)$ is the set of elements of the closure
of~$\Wm(b)$ of maximal trace. If~$n<\infty$ then $\Wm(b)$ is a
continuous image of the unit sphere of~$\bC ^n$, which is
compact. Hence in this case,
\[\Wme(b)=\{ Q(b,\xi) \colon \xi\in \bC^n,\ \|\xi\|=1,\
b^*b\xi=\|b\|^2\xi\}.\] Observe that the vectors $\xi$ appearing in
this expression are precisely the unit vectors in the eigenspace
of~$b^*b$ corresponding to its maximal eigenvalue.

If $a=[a_1\ \dots\ a_\ell]$ and $b=
\begin{smallvect}
  b_1\svdots b_\ell
\end{smallvect}$ for some $a_j\in \L(X)$ and $b_j\in \L(Y)$, then we
will write $T=a\odot b$ or say that ``$a,b$ represent~$T$'' to mean
that $T$ is the elementary operator \[T\colon\L(Y,X)\to \L(Y,X),\quad
x\mapsto \sum_{j=1}^\ell a_j xb_j.\] Such a representation of~$T$ is
far from unique due to bilinearity in $(a, b)$;
for example, if $T=a\odot b$, then we also have
$T=(a\alpha^{-1})\odot(\alpha b)$ for any invertible matrix $\alpha\in
M_\ell$.

If~$D$ is a subring of~$M_n$ then $M_{m,n}$ is a right
$D$-module. A \emph{right $D$-module map} on~$M_{m,n}$ is a linear map
$T\in\L(M_{m,n})$ such that \[T(xd)=T(x)d\quad\text{for all $x\in M_{m,n}$ and
all $d\in D$}.\] We write $\L_{D}(M_{m,n})$ for the set of all right
$D$-module maps on~$M_{m,n}$.\medskip
\goodbreak

\begin{rk}\label{rk:rep}
  If $n\in\bN$ and $T$ is a bounded right $D_n$-module map on~$M_{m,
    n}$, then~$T$ is an elementary operator of the
  form~$Tx=\sum_{j=1}^n a_jxb_j$ for some $b_j\in D_n$ and $a_j\in
  M_m$.  Indeed, for each~$j$, the map $v\mapsto T(ve_j^*)e_j$ is
  linear $\bC^m\to\bC^m$, and it is bounded since~$T$ is
  bounded. Hence there is an operator~$a_j\in M_m$ such that $a_j
  v=T(ve_j^*)e_j$ for $v\in\bC^m$.  We call the operators $a_j$ the
  \emph{column operators} of~$T$.  Writing~$b_j=e_je_j^*$, we have
  \[ \sum_{j=1}^n a_jxb_j = \sum_{j=1}^n a_jxe_je_j^* =\sum_{j=1}^n
  T(xe_je_j^*)e_je_j^* =\sum_{j=1}^n T(x)e_je_j^*= T(x).\] We have
  found a representation $T=a'\odot b'$ where $a'=[a_1\ \dots a_n]$
  and $b'=\begin{smallvect}
  b_1\svdots b_n
\end{smallvect}$, and each $b_j$ is diagonal. As
  discussed in~\cite[\sectmark 3]{timoney03}, there is a
  representation~$T=a\odot b$ where the entries of $a$ and $b$ are
  linear combinations of the entries of $a'$ and $b'$, respectively,
  so that
  \[\|T\|_{cb}=\|a\|\,\|b\|=\tfrac12(\|a\|^2+\|b\|^2).\] Observe that
  the entries of $b$ are then diagonal, and $\|a\|=\|b\|$ by the
  arithmetic mean/geometric mean
  inequality. In~\cite[Theorem~3.3]{timoney03}, the second author
  shows that a representation $T=a\odot b$ satisfies these equalities
  if and only if
  \begin{equation}\tag{$\bigstar$}\label{eq:Wme}
    \conv \Wme(a^*)\cap \conv \Wme(b)\ne \emptyset
  \end{equation}
  where $\conv S$ denotes the convex hull of a subset~$S$ of a
  vector~space.
  
  If~$n=\infty$, so that~$T$ is a bounded right $D_\infty$-module map
  on $\B(H,\bC^m)$ where $H=\ell^2(\bN)$, then the same argument gives
  $Tx=\sum_{j=1}^\infty a_jxb_j$ where the operators $a_j\in M_m$ are
  given by $a_jv=T(ve_j^*)e_j$ and $b_j=e_je_j^*\in \B(H)$, and the
  series converges in the strong operator topology.
\end{rk}

The relevance of the following lemma to our problem is plain in light
of Remark~\ref{rk:rep}, and condition~(\ref{eq:Wme}) in particular.
\begin{lem}\label{lem:conv}
  Let~$n\in\bN$,
  let $\ell\in\bN$ and let $b_1,\dots,b_\ell\in D_n$.  If~$b=
  \begin{smallvect}
    b_1\svdots b_\ell
  \end{smallvect}$, then
  \[\Wme(b)=\conv\{Q(b,e_p) \colon 1\leq p\leq n,\ b^*be_p=\|b\|^2e_p\}.\]
  In particular, $\Wme(b)$ is convex.
\end{lem}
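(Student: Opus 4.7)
The key observation is that the diagonality of $b_1,\dots,b_\ell$ makes $Q(b,\xi)$ depend only on $(|\xi_p|^2)_{p=1}^n$, which will force $\Wm(b)$ itself to be a convex set.

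Write each $b_j=\sum_{p=1}^n \beta_{jp}e_pe_p^*$ for some $\beta_{jp}\in\bC$. For a unit vector $\xi=\sum_p \xi_p e_p\in\bC^n$ we have $b_j\xi=\sum_p \beta_{jp}\xi_p e_p$, and therefore
\[
\langle b_i\xi,b_j\xi\rangle=\sum_{p=1}^n |\xi_p|^2\,\beta_{ip}\overline{\beta_{jp}}=\sum_{p=1}^n |\xi_p|^2\,\langle b_ie_p,b_je_p\rangle.
\]
Reading this off as an entry of the $\ell\times\ell$ matrix $Q(b,\xi)$ gives the convex combination
\[
Q(b,\xi)=\sum_{p=1}^n |\xi_p|^2\, Q(b,e_p).
\]
In particular, $\Wm(b)\subseteq\conv\{Q(b,e_p):1\le p\le n\}$, and equality holds because the convex weights $|\xi_p|^2$ can take any nonneg\-ative values summing to $1$.

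Next, I would identify the eigenspace of $b^*b$ attached to $\|b\|^2$. Since every $b_j$ is diagonal, so is $b^*b=\sum_j b_j^*b_j$, with $p$th diagonal entry $\sum_j|\beta_{jp}|^2=\|be_p\|^2$. Hence $\|b\|^2=\max_p\|be_p\|^2$, and the maximal eigenspace of $b^*b$ is the span of $E=\{e_p:\|be_p\|^2=\|b\|^2\}$. A unit vector $\xi$ lies in this eigenspace iff $\xi_p=0$ whenever $e_p\notin E$.

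Finally, combine these two facts with the formula from the excerpt, namely
\[\Wme(b)=\{Q(b,\xi):\xi\in\bC^n,\ \|\xi\|=1,\ b^*b\xi=\|b\|^2\xi\}.\]
For such a $\xi$, the previous paragraph shows that the formula for $Q(b,\xi)$ reduces to $\sum_{e_p\in E}|\xi_p|^2 Q(b,e_p)$, which is a convex combination of $\{Q(b,e_p):e_p\in E\}$. Conversely, given nonnegative weights $\lambda_p$ (for $e_p\in E$) with $\sum\lambda_p=1$, the vector $\xi=\sum_{e_p\in E}\sqrt{\lambda_p}\,e_p$ is a unit eigenvector of $b^*b$ for $\|b\|^2$, and $Q(b,\xi)=\sum_{e_p\in E}\lambda_p Q(b,e_p)$. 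This yields the desired equality, and convexity of $\Wme(b)$ follows immediately. The only step with any subtlety is identifying the maximal eigenspace of $b^*b$, but this is straightforward given the diagonality hypothesis, so I do not anticipate a real obstacle.
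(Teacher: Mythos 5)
Your proof is correct and follows essentially the same route as the paper: both rest on the observation that diagonality of the $b_j$ gives $Q(b,\xi)=\sum_{p}|\xi_p|^2\,Q(b,e_p)$, combined with the finite-dimensional description of $\Wme(b)$ as $\{Q(b,\xi)\colon \xi \text{ a unit vector with } b^*b\xi=\|b\|^2\xi\}$ and the fact that $b^*b$ is diagonal, so its maximal eigenspace is spanned by the relevant $e_p$. The only difference is cosmetic: you write out the coordinates $\beta_{jp}$ and both inclusions explicitly, where the paper compresses this into a single computation after permuting basis vectors.
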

\begin{proof}
  The matrix $b^*b=\sum_{j=1}^\ell b_j^*b_j$ is positive semi-definite
  and diagonal with largest eigenvalue~$\|b\|^2$. Let~$r$ be the
  dimension of the corresponding eigenspace.  Permuting
  $e_1,\dots,e_n$ if necessary, we have $b^*b=\|b\|^2(I_r \oplus d)$
  for some positive semi-definite $d\in D_{n-r}$ with $\|d\|<1$. So
  $Q(b,\xi)\in \Wme(b)$ if and only if $\xi=\sum_{p=1}^r \xi_p e_p$
  for some $\xi_p\in\bC$ such that $\sum_{p=1}^r|\xi_p|^2=1$. Each
  $b_j$ is diagonal so the vectors $e_q$ are eigenvectors, hence
  \begin{align*} Q(b,\xi)&=\Big[\sum_{p,q=1}^r \langle b_i\xi_pe_p,b_j \xi_q
  e_q\rangle\Big] \\&= \Big[\sum_{p=1}^r \langle
  b_i\xi_pe_p,b_j\xi_pe_p\rangle\Big]\\&= \sum_{p=1}^r |\xi_p|^2
  Q(b,e_p).\qedhere
\end{align*}
\end{proof}
\goodbreak

The following argument is essentially contained in any
of~\cite{paulsen,PSS,smith91}.
\begin{lem}\label{lem:smith}
  Let~$H,K$ be Hilbert spaces, let~$X$ be a subspace of~$\B(H,K)$, and
  let $A\subseteq \B(H)$ be a right norming set for~$X$, meaning
  that~$xa\in X$ for all $x\in X$ and $a\in A$, and for every~$n\ge1$
  and every $z\in M_n(X)$, we have
  \[ \|z\|_{M_n(X)} = \sup \{ \|zb\|_{M_{n,1}(X)} \colon b\in
  M_{n, 1}(A),\ \|b\| \leq 1\}.
  \]
  If $T\colon X\to X$ is a bounded, linear map such that $T(xa)=T(x)a$
  for all $x\in X$, $a\in A$, then $\|T_n\|=\|T_{n,1}\|$ for all
  $n\ge1$.
\end{lem}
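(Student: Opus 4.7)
The plan is to use the right $A$-modularity of $T$ to transport the action of $T_n$ on a matrix $z \in M_n(X)$ into the action of $T_{n,1}$ on the column $zb$, where $b \in M_{n,1}(A)$ is supplied by the norming hypothesis.

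First I would dispose of the easy inequality $\|T_{n,1}\| \le \|T_n\|$: if $w \in M_{n,1}(X)$ and $\tilde w \in M_n(X)$ is obtained from $w$ by appending $n-1$ zero columns, then $\tilde w$ has the same norm as $w$ (since the inclusion $M_{n,1}(\B(H,K)) \hookrightarrow M_n(\B(H,K))$ via zero-padding is an isometry), and by linearity $T_n(\tilde w)$ is the zero-padded version of $T_{n,1}(w)$, which has norm $\|T_{n,1}(w)\|$. Hence $\|T_{n,1}(w)\| \le \|T_n\| \, \|w\|$.

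For the main inequality, fix $z = [z_{ij}] \in M_n(X)$ with $\|z\| \le 1$. By the right norming hypothesis applied to $T_n(z) \in M_n(X)$,
\[
\|T_n(z)\| = \sup\bigl\{\|T_n(z)\, b\|_{M_{n,1}(X)} : b \in M_{n,1}(A),\ \|b\| \le 1\bigr\}.
\]
For any such $b = [b_j]$, the $i$-th entry of the column $T_n(z)\, b$ is $\sum_{j=1}^n T(z_{ij})\, b_j$. Since each $b_j \in A$ and $T$ is a right $A$-module map, $T(z_{ij})\, b_j = T(z_{ij} b_j)$, and then linearity of $T$ gives
\[
\sum_{j=1}^n T(z_{ij})\, b_j = T\Bigl(\sum_{j=1}^n z_{ij} b_j\Bigr) = T\bigl((zb)_i\bigr).
\]
Thus $T_n(z)\, b = T_{n,1}(zb)$. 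Combining this identity with the bound $\|zb\| \le \|z\| \le 1$ (from the ambient operator norm on $M_n(\B(H,K))$), we obtain
\[
\|T_n(z)\, b\| = \|T_{n,1}(zb)\| \le \|T_{n,1}\| \, \|zb\| \le \|T_{n,1}\|.
\]
Taking the supremum over $b$ yields $\|T_n(z)\| \le \|T_{n,1}\|$, and then over $z$ gives $\|T_n\| \le \|T_{n,1}\|$.

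There is really no serious obstacle here; the main point is simply to recognize that the norming hypothesis lets us reduce to columns, and that right $A$-modularity is exactly what is needed to commute $T_n$ past multiplication by a column from $M_{n,1}(A)$. The one thing worth being careful about is ensuring that $zb$ indeed lies in $M_{n,1}(X)$, which it does because $xa \in X$ for all $x \in X$ and $a \in A$, so each coordinate $\sum_j z_{ij} b_j$ is a sum of elements of $X$.
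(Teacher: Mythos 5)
Your argument is correct and is essentially the same as the paper's: both prove the identity $T_n(z)b=T_{n,1}(zb)$ via right $A$-modularity and linearity, bound $\|zb\|\le\|z\|\,\|b\|$, and invoke the norming hypothesis on $T_n(z)$ to conclude $\|T_n\|\le\|T_{n,1}\|$, the reverse inequality being immediate. Your extra remarks (zero-padding for the easy inequality, and checking $zb\in M_{n,1}(X)$) are fine details the paper leaves implicit.
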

\begin{proof}
  The inequality $\|T_{n,1}\|\leq \|T_n\|$ is
  clear. On the other hand, if~$z\in M_n(X)$ and $b\in M_{n, 1}(A)$,
  then 
  \[T_n(z)b=\Big[\sum_j T(z_{ij})b_j\Big]_i = \Big[\sum_j
  T(z_{ij}b_j)\Big]_i = T_{n,1}(zb)\] and $\|zb\| \leq
  \|z\|_{M_n(X)}\,\|b\|_{M_{n, 1}(A)}$. Since $A$ is a right norming
  set for~$X$, 
  \begin{align*}
    \|T_n\| &=\sup\{ \|T_n(z)\|_{M_n(X)}\colon z\in M_n(X),\ \|z\|\leq 1\}\\
    &= \sup\{\|T_{n,1}(zb)\|_{M_{n,1}(X)} \colon b\in M_{n, 1}(A),\ \|b\|\leq 1,\
    z\in M_n(X),\ \|z\|\leq 1\}\\& \leq \|T_{n,1}\|.\qedhere
  \end{align*}
\end{proof}

As shown in~\cite{paulsen,smith91}, the set $D_n$ of diagonal matrices
in~$M_n$ is a right norming set for~$M_{m, n}$.  Thus we immediately
obtain:
\begin{prop}\label{prop:col}
  If~$m,n\in \bN\cup\{\infty\}$ and $T$ is a right $D_n$-module map on
  $M_{m, n}$, then $\|T\|_{cb}=\sup_{k\ge1}\|T_{k,1}\|$.\qed
\end{prop}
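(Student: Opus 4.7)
The plan is to deduce this as an immediate application of Lemma~\ref{lem:smith} once we identify the right-norming set. Concretely, I would take $H=\bC^n$, $K=\bC^m$, $X=M_{m,n}=\B(H,K)$, and $A=D_n\subseteq \B(H)$. Then the right $D_n$-module hypothesis on $T$ says exactly that $T(xa)=T(x)a$ for all $x\in X$ and $a\in A$, which is the commutation hypothesis of Lemma~\ref{lem:smith}.

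The second ingredient I need is the assertion (stated just above the proposition and attributed to \cite{paulsen,smith91}) that $D_n$ is a right norming set for $M_{m,n}$, i.e.\ that for $z\in M_k(M_{m,n})$ one has
\[
\|z\|_{M_k(M_{m,n})}=\sup\{\|zb\|_{M_{k,1}(M_{m,n})}\colon b\in M_{k,1}(D_n),\ \|b\|\leq 1\}.
\]
Granting this (in both the finite and infinite-dimensional cases, which is why the cited references are used rather than re-proving the fact), Lemma~\ref{lem:smith} yields $\|T_k\|=\|T_{k,1}\|$ for every $k\ge 1$.

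Finally, since padding with zero rows or columns is isometric on $M_{m,n}$, the general inequality $\|T_{k,l}\|\leq \|T_{\max(k,l)}\|$ gives $\|T\|_{cb}=\sup_{k\ge 1}\|T_k\|$. Combining with the previous step yields $\|T\|_{cb}=\sup_{k\ge 1}\|T_{k,1}\|$, as claimed. There is no real obstacle here: the proof is essentially a one-line application of Lemma~\ref{lem:smith}, with the only substantive input being the right-norming property of $D_n$, which is imported from the literature.
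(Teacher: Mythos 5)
Your proposal is correct and follows essentially the same route as the paper: the paper also derives Proposition~\ref{prop:col} immediately from Lemma~\ref{lem:smith} together with the fact, quoted from~\cite{paulsen,smith91}, that $D_n$ is a right norming set for $M_{m,n}$ (including the cases $m$ or $n$ infinite). Your concluding observation that $\|T\|_{cb}=\sup_{k\ge1}\|T_k\|$ because zero-padding is isometric matches the paper's conventions in Section~\ref{sec:prelim}.
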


\begin{rk}\label{rk:one}
  If $m=1$ or $n=1$ (that is, if the matrices on which our maps act
  have either one row, or one column) then $\|T\|_{cb}=\|T\|$ for
  every $T\in \L(M_{m,n})$. For if $n=1$, then $M_{m, n}=\bC^m$, and
  every linear map $T\colon \bC^m\to \bC^m$ may be written as $Tx =
  ax$ for $a \in M_m$. Hence $\|T\| = \|a\|$. Moreover $T_k \colon
  M_k(\bC^m) \to M_k(\bC^m)$ is given by left multiplication by a
  block diagonal matrix $a^{(k)}$ (with $k$ copies of $a$ on the
  diagonal), and $\|T_k\| = \| a^{(k)}\| = \|a\| =\|T\|$, so
  $\|T\|_{cb}=\|T\|$.  If $m=1$, we can apply a similar argument with
  right multiplication or use the previous case on the map $T^*\colon
  M_{n,m}\to M_{n,m}$ given by $T^*(x)= T(x^*)^*$.
\end{rk}

\section{Two columns}\label{sec:twocol}

We now show that, surprisingly, the conclusion $\|T\|_{cb}=\|T\|$ of
Remark~\ref{rk:one} persists for right $D_2$-module maps on~$M_{m,2}$.

\begin{lem}\label{lem:convproj}
  If~$X$ is a set of positive semi-definite $2\times 2$ matrices with
  trace\/~$1$ and there is a rank one projection $p\in \conv X$, then
  $p\in X$.
\end{lem}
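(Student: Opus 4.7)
The plan is to identify the ambient convex set $S=\{y\in M_2:y\ge0,\ \trace y=1\}$ and recognize the rank one projections as exactly its extreme points. Since $X\subseteq S$, we have $\conv X\subseteq S$, so the hypothesis places the extreme point $p$ inside a convex hull of elements of $S$; extremality then forces every element appearing with positive weight in any such decomposition to equal $p$, giving $p\in X$.

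To identify the extreme points of $S$, I would use the Bloch ball parametrization $(r_1,r_2,r_3)\mapsto\tfrac12(I+r_1\sigma_1+r_2\sigma_2+r_3\sigma_3)$, where $\sigma_1,\sigma_2,\sigma_3$ are the Pauli matrices. This is an affine bijection from the closed Euclidean unit ball in $\bR^3$ onto $S$, and the eigenvalues of the image are $\tfrac12(1\pm|\vec r|)$; thus a rank one projection corresponds precisely to $|\vec r|=1$. Since the extreme points of the closed Euclidean unit ball are exactly its boundary sphere, the extreme points of $S$ are precisely the rank one projections. A more elementary alternative is to observe that any $y\in S$ which is not a rank one projection has eigenvalues $\lambda,1-\lambda$ with $0<\lambda<1$, and its spectral decomposition $y=\lambda q_1+(1-\lambda)q_2$, with $q_1,q_2$ orthogonal rank one projections in $S$, exhibits $y$ as a non-trivial convex combination in $S$.

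With extremality of $p$ in hand, given any convex combination $p=\sum_{i=1}^N\lambda_i x_i$ with $x_i\in X$, $\lambda_i>0$ and $\sum\lambda_i=1$, we either have $N=1$ (so $p=x_1\in X$) or we split off the first term and write $p=\lambda_1 x_1+(1-\lambda_1)y$ with $y\in\conv X\subseteq S$ and $\lambda_1\in(0,1)$; the extremality of $p$ in $S$ then forces $x_1=y=p$, so again $p\in X$.

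There is essentially no significant obstacle here: the lemma is the well-known fact that pure states are the extreme points of the state space, specialized to $M_2$ where the state space is the Bloch ball, together with the standard observation that an extreme point cannot appear non-trivially in a convex combination of elements of the ambient set without equaling each of them.
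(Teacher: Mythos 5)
Your proof is correct, but it takes a different route from the paper's. The paper argues directly on the given convex combination: after conjugating by a unitary so that $p=e_1e_1^*$, positive semi-definiteness of the matrices $\alpha_j\in X$ appearing in the combination forces their $(2,2)$ entries to vanish (they are nonnegative and average to zero), hence their off-diagonal entries vanish, and $\trace\alpha_j=1$ then gives $\alpha_j=p$ for every $j$. You instead identify the ambient state space $S$, prove via the Bloch-ball parametrization that rank one projections are exactly the extreme points of $S$, and then use the standard splitting argument to see that an extreme point of $S$ lying in $\conv X$ must belong to $X$. Conceptually both proofs rest on the same fact --- a rank one projection is extreme among states --- but yours is more structural and generalizes verbatim to $M_n$, at the cost of importing the Bloch-ball facts (affine bijection with the unit ball, eigenvalues $\tfrac12(1\pm|\vec r\,|)$), whereas the paper's is a short, self-contained entry-wise computation. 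One caveat: your ``more elementary alternative'' (spectral decomposition of a state that is not a rank one projection) only shows that every extreme point of $S$ is a rank one projection, which is the inclusion you do not need; the direction actually used in your final step --- that $p$ itself is extreme in $S$ --- is supplied by the Bloch-ball argument (or, more simply, by noting that if $p=\lambda x+(1-\lambda)y$ with $x,y\in S$, $0<\lambda<1$, and $p\xi=0$, then $\langle x\xi,\xi\rangle=0$, so $x$ is supported on the range of $p$ and trace one forces $x=p$, which is essentially the paper's computation).
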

\begin{proof}
  Conjugating by a suitable unitary matrix, we may assume that
  $p=e_1e_1^*$. Now~$p$ is a convex combination of some
  $\alpha_1,\dots,\alpha_k\in X$ and each $\alpha_j$ is positive
  semi-definite. Since the $(2,2)$ entry of $p$ is zero, the $(2,2)$
  entry of each $\alpha_j$ is~zero, which implies that the
  off-diagonal entries of each $\alpha_j$ are also zero. Since
  $\trace\alpha_j=1$, we have $\alpha_j=p$ for all~$j$.
\end{proof}

\begin{thm}\label{thm:2col}
  If~$m\in\bN$ and $T\colon M_{m, 2}\to M_{m, 2}$ is a right
  $D_2$-module map, then $\|T\|_{cb}=\|T\|$.
\end{thm}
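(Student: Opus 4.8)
The plan is to convert both $\|T\|$ and $\|T\|_{cb}$ into maximisation problems over density matrices, and then to show that for $n=2$ the maximum is always attained at a rank-one density.

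By Remark~\ref{rk:rep} I may write $Tx=a_1xe_1e_1^*+a_2xe_2e_2^*$, so $T$ acts columnwise: if $x=[x_1\ x_2]$ then $Tx=[a_1x_1\ a_2x_2]$. By Proposition~\ref{prop:col}, $\|T\|_{cb}=\sup_k\|T_{k,1}\|$, and under $M_{k,1}(M_{m,2})\cong M_{km,2}$ the ampliation $T_{k,1}$ is the same columnwise map with each $a_i$ replaced by $I_k\otimes a_i$. First I would compute $\|T_{k,1}\|$ by dualising the operator norm in a vector $\zeta\in\bC^{km}$ and recognising the remaining supremum over the test matrix as a nuclear norm of a $(km)\times2$ matrix; evaluating that nuclear norm through its $2\times2$ Gram matrix (an elliptical/Perron--Frobenius computation, and the precise point at which $n=2$ is used) yields
\[\|T_{k,1}\|^2=\sup\bigl\{\lambda_{\max}(G(R))\colon R\in M_m,\ R\ge0,\ \trace R=1,\ \rank R\le k\bigr\},\]
where, writing $a=[a_1\ a_2]$ and $\delta(R)=\trace(a_1a_1^*R)\,\trace(a_2a_2^*R)-\lvert\trace(a_2a_1^*R)\rvert^2\ge0$,
\[G(R)=\begin{smallbmatrix}\trace(a_1a_1^*R)&\sqrt{\delta(R)}\\\sqrt{\delta(R)}&\trace(a_2a_2^*R)\end{smallbmatrix}.\]
Letting $k\to\infty$ gives $\|T\|_{cb}^2=\max\{\lambda_{\max}(G(R))\colon R\ge0,\ \trace R=1\}$, while $\|T\|^2$ is the same maximum restricted to rank-one $R$.

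The key structural observation is that $f(R):=\lambda_{\max}(G(R))=\tfrac12\bigl(L(R)+\sqrt{L(R)^2-4\lvert\Lambda(R)\rvert^2}\bigr)$ depends on $R$ only through $(L(R),\Lambda(R))\in\bR\times\bC$, where $L(R)=\trace(aa^*R)$ and $\Lambda(R)=\trace(a_2a_1^*R)$. Hence $\|T\|_{cb}^2=\max_S F$ and $\|T\|^2=\max_{S_1}F$, where $F(L,\Lambda)=\tfrac12(L+\sqrt{L^2-4\lvert\Lambda\rvert^2})$, the set $S=\{(L(R),\Lambda(R))\colon R\ge0,\ \trace R=1\}$ is compact, and $S_1$---the image of the rank-one densities---is the joint numerical range $\{(\langle aa^*\zeta,\zeta\rangle,\langle a_2a_1^*\zeta,\zeta\rangle)\colon\|\zeta\|=1\}$. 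Since $R\mapsto(L(R),\Lambda(R))$ is linear and carries the density set onto $S$, we have $S=\conv S_1$. Crucially $F$ is strictly increasing in $L$ (indeed $\partial F/\partial L>0$), so any maximiser of $F$ over $S$ must lie on the boundary $\partial S$: at an interior point one could increase $L$ and hence $F$.

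The decisive step is to promote this boundary maximiser to a point of $S_1$---equivalently, to show that the optimal density may be taken rank-one. For $m=2$ the Bloch-sphere parametrisation shows $S_1$ is a (possibly degenerate) ellipsoid: either it is already convex, so $S=S_1$, or it is an ellipsoidal surface with $\partial(\conv S_1)=S_1$; either way the maximiser lands in $S_1$. This $2\times2$ fact is the geometric content of Lemma~\ref{lem:convproj}, which in representation-theoretic language (condition~\eqref{eq:Wme}) says that a rank-one projection lying in the convex hull of a family of $2\times2$ positive semidefinite matrices of equal trace must itself belong to the family, i.e.\ be attained by a genuine unit eigenvector. For $m\ge3$ one instead invokes the convexity of the joint numerical range of the three Hermitian matrices $aa^*$, $\Re(a_2a_1^*)$, $\Im(a_2a_1^*)$ (Au-Yeung--Poon), which gives $S_1=S$ outright; and $m=1$ is Remark~\ref{rk:one}.

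I expect this rank-one reduction to be the main obstacle, as it is exactly where $n=2$ is essential. For $n\ge3$ the matrix $G(R)$ becomes $n\times n$ and $f$ is no longer governed by a single increasing coordinate $L$ together with a $\sqrt{\,\cdot\,}$ correction, so boundary maximisers of the corresponding $F$ need not be pure states; this is consistent with the later fact that $C(m,n)>1$ for $n\ge3$. A secondary technical point to handle carefully is the degenerate $m=2$ geometry (when $S_1$ collapses to a segment or a flat disc), which should be dispatched by the convex sub-cases above.
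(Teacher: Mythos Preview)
Your plan is sound and would yield a correct proof, but it follows a genuinely different route from the paper's. You compute $\|T\|^2$ and $\|T\|_{cb}^2$ explicitly as maxima of the single scalar function $F(L,\Lambda)=\tfrac12\bigl(L+\sqrt{L^2-4|\Lambda|^2}\bigr)$ over $S_1$ (rank-one densities) and $S=\conv S_1$ respectively, and then argue that the maximum over $S$ already lies in $S_1$: for $m\ge3$ you invoke Au-Yeung--Poon convexity of the joint numerical range of three Hermitians to get $S_1=S$ outright, and for $m=2$ the Bloch-sphere picture gives either $S_1=S$ or $\partial S=S_1$. The paper instead works in Timoney's $\Wme$ framework with an optimised representation $T=a\odot b$ satisfying $\|a\|=\|b\|$, and must show $\Wme(a^*)\cap\Wme(b)\neq\emptyset$; the main case is handled by a supporting-hyperplane argument together with Carath\'eodory's theorem to pass to a three-dimensional subspace on which the three relevant Hermitians become linearly dependent, whereupon Toeplitz--Hausdorff suffices. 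Your argument is more direct and makes the dependence on $n=2$ transparent (exactly three real parameters govern $F$), at the cost of importing the Au-Yeung--Poon theorem as a black box; the paper's argument is more intricate but self-contained, needing only the classical Toeplitz--Hausdorff theorem. Your passing reference to Lemma~\ref{lem:convproj} is an analogy rather than a logical ingredient of your $m=2$ step, which is really the Bloch-sphere observation.
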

\begin{proof}
  Suppose $\|T\|_{cb}=1$.  By Remark~\ref{rk:rep},
  $Tx=a_1xb_1+a_2xb_2$ for some $a_1,a_2\in M_m$ and $b_1,b_2\in D_2$
  such that $\|a\|=\|b\|=1$ where $a=[a_1\ a_2]$ and $b=
  \begin{smallbmatrix}
    b_1\\b_2
  \end{smallbmatrix}$.  By Lemma~\ref{lem:conv}, $\Wme(b)$ is convex,
  so by~\cite[Theorem~3.3]{timoney03}, $\Wme(b)$ intersects the convex
  hull of $\Wme(a^*)$. By~\cite[Proposition~3.1]{timoney03}, it
  suffices to show that $\Wme(a^*)\cap \Wme(b)\ne \emptyset$.

  Observe that $b^*b=b_1^*b_1+b_2^*b_2$ is a $2\times 2$ diagonal
  positive semi-definite matrix of norm~$1$, so its
  $1$-eigenspace~$E_1(b^*b)$ has dimension~$1$ or~$2$.

  If $\dim E_1(b^*b)=1$, then $b^*b=
  \begin{smallbmatrix}
    1&0\\0&t
  \end{smallbmatrix}$ or $b^*b=
  \begin{smallbmatrix}
    t&0\\0&1
  \end{smallbmatrix}$ for some $t\in [0,1)$. 
  If $b^*b= \begin{smallbmatrix} 1&0\\0&t
  \end{smallbmatrix}$, then \[\Wme(b)=\{ Q(b, ze_1)\colon z\in\bT\} =
  \{ Q(b,e_1)\}=\{e_1e_1^*\}.\] Since $e_1e_1^*$ is a rank one
  projection in $\conv \Wme(a^*)$, we have $e_1e_1^*\in \Wme(a^*)$ by
  Lemma~\ref{lem:convproj}. Hence $\Wme(a^*)\cap \Wme(b)\ne\emptyset$.
  Similarly, if $b^*b=\begin{smallbmatrix} t&0\\0&1
  \end{smallbmatrix}$ then $e_2e_2^*\in \Wme(a^*)\cap
  \Wme(b)\ne\emptyset$.

  Now suppose that $\dim E_1(b^*b)=2$. Then $b^*b=I_2$, so if we write
  $\beta_i=Q(b,e_i)$ for $i=1,2$, then Lemma~\ref{lem:conv} shows that
  $\Wme(b)=\conv\{ \beta_1,\beta_2\}$. For~$i=1,2$, let us write $b_i=
  \begin{smallbmatrix}
    b_{i1}&0\\0&b_{i2}
  \end{smallbmatrix}$ and let $v_i=
  \begin{smallbmatrix}
    b_{1i}\\b_{2i}
  \end{smallbmatrix}$. A simple calculation reveals that $\|v_i\|=1$
  and $\beta_i=v_iv_i^*$. If $\beta_1=\beta_2$, then this rank one
  projection is in $\Wme(a^*)$ by Lemma~\ref{lem:convproj}, so
  $\Wme(a^*)\cap \Wme(b)\ne \emptyset$. So we may assume that
  $\beta_1\ne\beta_2$, so that $\Wme(b)$ is the proper closed line
  segment joining $\beta_1$ and $\beta_2$.

  For $t\in\bR$, let $\beta(t)=t\beta_1+(1-t)\beta_2$ and consider the
  closed convex set \[S=\{ t\in \bR\colon \beta(t)\in
  \conv\Wme(a^*)\}.\] Now $\beta_1$ and $\beta_2$ are distinct and
  $\|\beta_i\|_2=1$, where $\|\cdot\|_2$ is the Hilbert-Schmidt norm
  on~$M_2$. Moreover, $(M_2,\|\cdot\|_2)$ is strictly convex, and its
  closed unit ball contains $\Wme(a^*)$ since the trace-class norm of
  every matrix in~$\Wme(a^*)$ is~$1$, which dominates its
  Hilbert-Schmidt norm. Hence $S\subseteq [0,1]$, say $S=[s_1,s_2]$
  where $0\leq s_1\leq s_2\leq 1$, and $\beta(s_1)$ and $\beta(s_2)$
  are in the boundary of $\conv \Wme(a^*)$, and are the extreme points
  of $\conv\Wme(a^*)\cap \Wme(b)$.

  Given a hermitian $2\times 2$ matrix~$\alpha$ with trace~$1$, say
  $\alpha=\left[
  \begin{smallmatrix}
    a&b\\[.4ex]\overline{b}&1-a
  \end{smallmatrix}\right]$, let us write
  \[\theta(\alpha)=(a,\Re b, \Im b)\in \bR^3.\]
  Observe that the map $\theta$ defined on this convex set of matrices
  is injective and respects convex combinations.  Consider
  \[e=\theta(\beta(s_1)),\quad L=\theta(\Wme(b)),\quad
  W=\theta(\Wme(a^*)),\quad C=\conv W.\] By construction, $e$ is an
  extreme point of $C\cap L$ which lies in the boundary of~$C$.
  Let~$\Pi$ be a supporting hyperplane for~$C$ through~$e$, so that
  \[e\in \Pi=\{ x\in \bR^3\colon \langle x,\eta\rangle=r\}\] for some
  non-zero vector $\eta=(\eta_1,\eta_2,\eta_3)\in\bR^3$ and some
  $r\in\bR$, chosen so that \[C \subseteq \Pi^+=\{ x\in \bR^3 \colon
  \langle x,\eta\rangle \geq r\}.\]

  Since $e\in C=\conv W$ and $e\in \Pi$ we have $e\in \conv(\Pi\cap
  W)$; for otherwise, $e$ would be a proper convex combination of
  points in~$W$ involving at least one $x\in W$ with $\langle
  x,\eta\rangle >r$, hence $\langle e,\eta\rangle>r$ so $e\not\in
  \Pi$, a contradiction.

  We have
  \[W= \{ \theta(Q(a^*,\xi)) \colon \xi\in E_1(aa^*),\ \|\xi\|=1\}.\]
  Since $e\in \conv (\Pi\cap W)$ and $\Pi$ is an affine
  $2$-dimensional space, Carath\'eodory's theorem~\cite{gruber} shows
  that $e\in \conv\{w_1,w_2,w_3\}$ for some $w_1,w_2,w_3\in \Pi\cap
  W$. Choose unit vectors $\xi_1,\xi_2,\xi_3$ in $E_1(aa^*)$ so that
  $w_j=\theta(Q(a^*,\xi_j))$. Let~$F=\spn\{\xi_1,\xi_2,\xi_3\}$ and
  let
  \[ W'=\{ \theta(Q(a^*,\xi)) \colon \xi\in F,\ \|\xi\|=1\}.\] By
  construction, $e\in \conv W'$. We now wish to show that $W'$ is
  convex. Let~$p$ be the orthogonal projection $\bC^m\to F$ and
  consider the three self-adjoint operators $h_1,h_2,h_3\in \B(F)$ given by
  \[ h_1=pa_1a_1^*|_F,\quad h_2=p\Re(a_2a_1^*)|_F,\quad
  h_3=p\Im(a_2a_1^*)|_F.\]  Observe that
  \[W' = \{ ( \langle h_1 \xi,\xi\rangle,\langle
  h_2\xi,\xi\rangle,\langle h_3\xi,\xi\rangle) \colon \xi\in F,\
  \|\xi\|=1\}
  \]
  is the joint numerical range of $h_j$, $j=1,2,3$.
  Moreover, $W'\subseteq C\subseteq\Pi^+$, so if we write
  \[h=\eta_1 h_1 + \eta_2h_2+\eta_3h_3-rI_F\in \B(F), \] then $h\ge0$
  and $h\xi_j=0$ for $j=1,2,3$, so $h=0$.  Choose $j\in \{1,2,3\}$
  with $\eta_j\ne0$. Since $h=0$, the set~$W'$ is affinely equivalent
  to the joint numerical range of the pair of hermitian operators
  $\{\eta_kh_k \colon k\ne j\}$, which is convex by the
  Toeplitz--Hausdorff theorem~\cite{gust-rao}. Hence $W'$ is convex,
  so $e\in W'$ and \[\beta(s_1)=\theta^{-1}(e)\in \theta^{-1}(W')\cap
  \Wme(b) \subseteq \Wme(a^*)\cap\Wme(b)\ne\emptyset.\qedhere\]
 \end{proof}
\goodbreak

The case $m=\infty$ is now more or less immediate.
\begin{cor}\label{cor:infty2}
  If $T\colon M_{\infty,2}\to M_{\infty,2}$ is a right $D_2$-module
  map, then $\|T\|=\|T\|_{cb}$.
\end{cor}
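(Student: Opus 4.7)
The plan is to deduce Corollary~\ref{cor:infty2} from the finite-dimensional Theorem~\ref{thm:2col} by a compression-and-limit argument in the strong operator topology. By Proposition~\ref{prop:col} it suffices to show $\|T_{k,1}\|\le\|T\|$ for every $k\ge 1$, and by the infinite-dimensional part of Remark~\ref{rk:rep} there exist column operators $a_1,a_2\in\B(\ell^2(\bN))$ with
\[T(x)=a_1xe_1e_1^*+a_2xe_2e_2^*\quad\text{for all $x\in M_{\infty,2}$};\]
this explicit formula will drive the approximation.

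For each $N\in\bN$, let $P_N\in\B(\ell^2(\bN))$ be the orthogonal projection onto $\spn\{e_1,\dots,e_N\}$, let $\iota_N\colon M_{N,2}\hookrightarrow M_{\infty,2}$ be the zero-padding inclusion, and let $\pi_N\colon M_{\infty,2}\to M_{N,2}$ be the contractive ``first $N$ rows'' map. I will consider the compression $T_N=\pi_N\circ T\circ\iota_N\colon M_{N,2}\to M_{N,2}$. Since $\iota_N$ and $\pi_N$ commute with right multiplication by elements of $D_2$, $T_N$ is a right $D_2$-module map; since both are contractions, $\|T_N\|\le\|T\|$. Theorem~\ref{thm:2col} therefore gives
\[\|(T_N)_{k,1}\|\le\|T_N\|_{cb}=\|T_N\|\le\|T\|\quad\text{for all $k,N\ge 1$}.\]

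The main step is to pass to the limit in $N$. Given $z=[z_j]_{j=1}^k\in M_{k,1}(M_{\infty,2})$ with $\|z\|\le 1$, set $z^{(N)}=[\pi_Nz_j]_{j=1}^k$; then $\|z^{(N)}\|\le 1$. Re-embedding the output into $M_{k,1}(M_{\infty,2})$ by zero-padding each block, the $j$th entry of $(T_N)_{k,1}(z^{(N)})$ becomes $P_NT(P_Nz_j)$. For each fixed $\xi\in\bC^2$, the formula for $T$ expresses $(P_NT(P_Nz_j)-T(z_j))\xi$ as a linear combination of terms of the form $a_i(I-P_N)z_je_i$ and $(I-P_N)a_iz_je_i$, each of which tends to zero in norm because $P_N\to I$ strongly and both $z_je_i$ and $a_iz_je_i$ are fixed vectors in $\ell^2(\bN)$. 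Hence $(T_N)_{k,1}(z^{(N)})\to T_{k,1}(z)$ strongly, and lower semicontinuity of the operator norm under SOT limits yields
\[\|T_{k,1}(z)\|\le\liminf_N\|(T_N)_{k,1}(z^{(N)})\|\le\|T\|\,\|z\|.\]
The only step that requires real care is this SOT convergence, but it is routine given the explicit form of $T$; there is no substantial obstacle beyond the work already done in Theorem~\ref{thm:2col}.
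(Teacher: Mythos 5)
Your proof is correct and follows essentially the same route as the paper: compress $T$ to the finite corners $M_{N,2}$, apply Theorem~\ref{thm:2col} there, and recover $\|T_{k,1}\|\le\|T\|$ (via Proposition~\ref{prop:col}) by letting $N\to\infty$. The only cosmetic difference is that you argue directly, using the column-operator representation and lower semicontinuity of the operator norm under strong limits, whereas the paper argues by contradiction, using that elements of $M_{k,1}(M_{\infty,2})$ have finite rank (hence are compact) so that the truncations converge in norm.
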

\begin{proof}
  Otherwise, there is a counterexample~$T$ with $1=\|T\|<\|T\|_{cb}$.
  Recall that $M_{\infty, 2}=\B(\bC^2,H)$ where $H=\ell^2(\bN)$. By
  Proposition~\ref{prop:col}, there is some $k>1$ and some $x\in
  M_{k,1}(M_{\infty, 2})=\B(\bC^2,H\otimes \bC^k)$ with
  $\|x\|<1<\|T_{k,1}(x)\|$. Given $m\in \bN$, let $p_m\in \B(H)$ be
  the orthogonal projection onto the linear span of~$\{e_i\colon 1\leq
  i\leq m\}$ and consider $q_m=p_m\otimes I_k$.  Every operator in
  $\B(\bC^2,H\otimes \bC^k)$ has rank at most~$2$, so is compact, and
  $T_{k,1}$ is bounded (in fact, $\|T_{k,1}\|\leq k$). Hence there is
  $m\in\bN$ such that $\|q_mT_{k,1}(q_mx)\|>1$.  Let us identify
  $M_{m,2}$ with the subspace $p_m(M_{\infty,2})$ of~$M_{\infty,2}$,
  and consider $S\colon M_{m, 2}\to M_{m, 2}$, $y\mapsto
  p_mT(y)$. This is a right $D_2$-module map and
  \[ \|S\|\leq \|T\|=1<\|q_mT_{k,1}(q_mx)\|=\|S_{k,1}(q_mx)\| \leq
  \|S\|_{cb},\] contradicting Theorem~\ref{thm:2col}.
\end{proof}

\section{CB norms and Hilbert-Schmidt norms}\label{sec:CBnormsHS}

Given $n,m$, let $\L(M_{m, n})$ be the set of linear
maps~$M_{m, n}\to M_{m, n}$. For a map $T\in \L(M_{m, n})$,
we continue to write
\[ \|T\|=\sup\{ \|Tx\|\colon x\in M_{m, n},\ \|x\|=1\}\] for the
operator norm of~$T$ with respect to the operator norm $\|\cdot\|$
on~$M_{m, n}$, and we will also consider the quantity
\[\hsnorm{T}=\sup\{ \|Tx\|_2\colon x\in M_{m,
  n},\ \|x\|_2 =1\},\] that is, the operator norm of~$T$ with respect
to the Hilbert-Schmidt norm $\|\cdot\|_2$ on~$M_{m, n}$.  Note
that if~$n=\infty$ or $m=\infty$, then all of these ``norms'' may take
the value~$\infty$.

For $T\in \L(M_{m, n})$, let $T^*\in \L(M_{n, m})$ be the map given by
\[T^*(x)=T(x^*)^*,\quad x\in M_{n,m}.\] Clearly, $\|T^*\|=\|T\|$ and
$\hsnorm{T^*}=\hsnorm{T}$.

\begin{rk}\label{rk:hsnorm}
  The norm $\hsnorm{\cdot}$ behaves particularly nicely when we take
  ampliations: if~$T\in \L(M_{m, n})$ and $s,t\in \bN$, then viewing
  $T_{s,t}$ as a map on $M_{ms,nt}$, we have
  $\hsnorm{T_{s,t}}=\hsnorm{T}$. Indeed, the inequality
  $\hsnorm{T}\leq \hsnorm{T_{s,t}}$ is trivial, and
  \begin{align*}
    \hsnorm{T_{s,t}}=\sup \|[Tx_{ij}]\|_2=
    \sup\sqrt{\sum_{\substack{1\leq i\leq s\\1\leq j\leq t}}
      \|Tx_{ij}\|_2^2}\leq \hsnorm{T},
  \end{align*}
  where the suprema are taken over those $x_{ij}\in M_{m,n}$ (for
  $1\leq i\leq s$ and $1\leq j\leq t$) so that $[x_{ij}]\in M_{ms,nt}$
  has $\|[x_{ij}]\|_2=1$.
\end{rk}

Below, we show that in many cases, $\hsnorm\cdot$ is comparable with
the operator norm for the right module maps~$T$ under
consideration. This allows us to estimate $\|T\|$ and $\|T\|_{cb}$,
and these estimates are used to find some upper bounds for
$\|T\|_{cb}/\|T\|$.  

\begin{prop}\label{prop:HS}
  Let~$m,n\in \bN\cup\{\infty\}$. If $T\colon M_{m, n}\to M_{m, n}$ is
  a right $D_n$-module map with column operators $\{a_j \colon 1\leq
  j<n+1\}$, then
  \[\hsnorm{T}=\sup_j\|a_j\|\leq \|T\|.\]
\end{prop}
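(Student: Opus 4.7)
The plan is to exploit the explicit column-by-column formula for $T$ that comes from Remark~\ref{rk:rep}. Recall that $T(x)=\sum_j a_jxe_je_j^*$ (a sum that is finite if $n<\infty$ and converges in the strong operator topology otherwise). If we write $x^{(j)}=xe_j$ for the $j$th column of $x$, then $xe_je_j^*$ is the matrix whose $j$th column equals $x^{(j)}$ and whose remaining columns vanish, and hence $T(x)$ is the matrix whose $j$th column is $a_jx^{(j)}$. This decoupling across columns is what makes the Hilbert-Schmidt norm easy to control.

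First I would prove $\hsnorm{T}\le \sup_j\|a_j\|$. Because the columns of $T(x)$ are $a_jx^{(j)}$, we get
\[
\|T(x)\|_2^2=\sum_j \|a_jx^{(j)}\|^2\le \sup_j\|a_j\|^2\cdot \sum_j \|x^{(j)}\|^2 = \sup_j\|a_j\|^2\cdot\|x\|_2^2,
\]
which yields the desired inequality (interpreted to allow $\infty=\infty$ if $m$ or $n$ is infinite).

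Next I would prove the reverse direction $\sup_j\|a_j\|\le \hsnorm{T}$, and simultaneously $\sup_j\|a_j\|\le \|T\|$. For any unit vector $v\in\bC^m$ and any index $j$, apply $T$ to the rank one matrix $x=ve_j^*$; then $T(x)=(a_jv)e_j^*$, and since $e_j$ is a unit vector we have $\|x\|=\|x\|_2=1$ and $\|T(x)\|=\|T(x)\|_2=\|a_jv\|$. Taking the supremum over unit $v$ gives $\|a_j\|\le \hsnorm{T}$ and $\|a_j\|\le \|T\|$, and then supping over $j$ finishes both bounds.

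I do not expect any serious obstacle: the right $D_n$-modularity is used only through the representation in Remark~\ref{rk:rep}, and the column decoupling reduces the statement to elementary norm inequalities on columns. The only mild care is when $m$ or $n$ is infinite, where one should read the suprema and sums as possibly infinite and interpret the SOT series termwise against basis vectors $e_j$, but this does not change the computation because the $j$th column of $T(x)$ is still $a_jx^{(j)}$ regardless.
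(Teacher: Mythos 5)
Your proof is correct and follows essentially the same route as the paper: both exploit the column decoupling $T(x)e_j=a_j(xe_j)$ coming from right $D_n$-modularity to compute the Hilbert--Schmidt norm, and both test on rank-one matrices $ve_j^*$ to bound $\|a_j\|$ by the operator norm (your only cosmetic variation is using these test matrices also for the lower Hilbert--Schmidt bound, where the paper gets equality directly from the supremum computation).
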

\begin{proof}
  Recall that the column operators $a_j\in\L(\bC^m)$ of~$T$ were
  defined in Remark~\ref{rk:rep}.  Suppose, for convenience of
  notation, that $n<\infty$. Let~$a\in \L((\bC^m)^n)$ be the diagonal
  direct sum of $a_1,\dots,a_n$, so that
  $a(\xi_1,\dots,\xi_n)=(a_1\xi_1,\dots,a_n\xi_n)$ for
  $\xi_1,\dots,\xi_n\in \bC^m$. Then $\|a\|=\max_j\|a_j\|$. Since $T$
  is a right $D_n$-module map, we have
  \begin{align*}
    \hsnorm{T}&=\sup\{\|Tx\|_2\colon x\in M_{m,n},\ \|x\|_2\leq 1\}\\
    &= \sup\Big\{\sqrt{\sum_{j=1}^n\|T(x)e_j\|^2}\colon x\in M_{m,n},\ \|x\|_2\leq 1\Big\}\\
    &= \sup\Big\{\sqrt{\sum_{j=1}^n\|T(xe_je_j^*)e_j\|^2}\colon x\in M_{m,n},\ \|x\|_2\leq 1\Big\}\\
    &= \sup\Big\{\sqrt{\sum_{j=1}^n\|a_j(xe_j)\|^2}\colon x\in M_{m,n},\ \sum_{j=1}^n\|xe_j\|^2\leq 1\Big\}\\
    &= \sup\Big\{\sqrt{\sum_{j=1}^n\|a_j(\xi_j)\|^2}\colon \xi_j\in \bC^m,\ \sum_{j=1}^n\|\xi_j\|^2\leq 1\Big\}\\
    &= \sup\{\|a\xi\|\colon \xi\in (\bC^m)^n,\ \|\xi\|\leq 1\}=\|a\|=\max_j\|a_j\|.
  \end{align*}
  Moreover, if~$\eta\in \bC^m$ and $1\leq j\leq n$ then $\|\eta
  e_j^*\|=\|\eta\|$, so 
  \begin{align*}
    \|T\|&=\sup\{\|Tx\|\colon x\in M_{m,n},\ \|x\|\leq 1\}\\
     &\geq \sup\{\|T(\eta e_j^*)\| \colon \eta\in \bC^m,\ \|\eta\|\leq 1\}
     = \sup\{ \|a_j\eta\|\colon \eta\in\bC^m,\ \|\eta\|\leq 1\}=\|a_j\|,
  \end{align*}
  so $\|T\|\geq \max_j\|a_j\|$.

  If~$n=\infty$ then the proof is similar.
\end{proof}
\goodbreak

The following lemma will be used to obtain a useful inequality in the
other direction in Proposition~\ref{prop:cb} below.
\begin{lem}\label{lem:norms}
  Let~$m,n\in \bN\cup\{\infty\}$ with $k=\min\{m,n\}<\infty$. If
  $T\colon M_{m, n}\to M_{m, n}$ is a linear map, then
  $\|T\|\leq \sqrt k\hsnorm{T}$. 
\end{lem}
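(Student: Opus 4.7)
The plan is to exploit the two standard comparisons between the operator norm and the Hilbert--Schmidt norm for rectangular matrices, noting that the rank of any $x\in M_{m,n}$ is at most~$k=\min\{m,n\}$.

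First I would record the pointwise estimate: for every $y\in M_{m,n}$, writing $\sigma_1(y)\ge\sigma_2(y)\ge\dots$ for its singular values, one has
\[\|y\|=\sigma_1(y)\le \sqrt{\sum_i\sigma_i(y)^2}=\|y\|_2,\]
so the operator norm is dominated by the Hilbert--Schmidt norm. Conversely, since only the first $\rank(y)\le k$ singular values are nonzero and each is bounded above by $\sigma_1(y)=\|y\|$, we get the ``reverse'' bound
\[\|y\|_2^2=\sum_i\sigma_i(y)^2\le \rank(y)\,\|y\|^2\le k\,\|y\|^2.\]

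With these two inequalities in hand, the lemma drops out in one line: for any $x\in M_{m,n}$ with $\|x\|\le 1$, applying the first inequality to $Tx$, the definition of $\hsnorm{T}$, and then the second inequality to $x$ yields
\[\|Tx\|\le \|Tx\|_2\le \hsnorm{T}\,\|x\|_2\le \sqrt{k}\,\hsnorm{T}\,\|x\|\le \sqrt{k}\,\hsnorm{T}.\]
Taking the supremum over such $x$ gives $\|T\|\le\sqrt{k}\,\hsnorm{T}$.

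There is no real obstacle; the only subtlety worth noting is that the hypothesis $k<\infty$ is used precisely to ensure the uniform rank bound $\rank(x)\le k$ for every $x\in M_{m,n}$, which is what makes the factor $\sqrt{k}$ finite. The assertion needs no module or algebraic structure on~$T$, so linearity alone suffices.
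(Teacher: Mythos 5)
Your proof is correct and follows essentially the same route as the paper: the chain $\|Tx\|\le\|Tx\|_2\le\hsnorm{T}\,\|x\|_2\le\sqrt{k}\,\hsnorm{T}\,\|x\|$, with the factor $\sqrt{k}$ coming from the fact that $x$ has at most $k$ nonzero singular values. The only cosmetic difference is that you obtain the bound $\|x\|_2\le\sqrt{k}\,\|x\|$ uniformly via the rank estimate, whereas the paper treats the case $m\le n$ using the eigenvalues of $xx^*$ and reduces $m>n$ to it by passing to $T^*$.
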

\begin{proof}
  Suppose $k=m\leq n$ and $\hsnorm{T}=1$. For $x\in M_{m, n}$,
  let~$\lambda_1\geq \lambda_2 \geq \dots \geq\lambda_k$ be the
  eigenvalues of~$xx^*$.  We have
  \[\|x\|^2=\lambda_1 \leq \|x\|_2^2=\sum_{j=1}^k\lambda_j \leq k
  \lambda_1=k\|x\|^2,\] so $\|x\| \leq \|x\|_2\leq \sqrt k\|x\|$. Hence
  $\|T(x)\| \leq \|T(x)\|_2 \leq\|x\|_2 \leq \sqrt k\|x\|$, so $\|T\|\leq
  \sqrt k$.

  If~$m>n$, consider the map $T^*\in \L(M_{n, m})$.
\end{proof}

If $c_1\in M_{m,n}$ and $k\in\bN$, then we write $c_1^{(k)}$ for the
block-diagonal operator in $M_k(M_{m,n})$ with $k$ copies of $c_1$
running down the diagonal. Similarly, if $c=
\begin{smallvect}
  c_1\svdots c_\ell
\end{smallvect}$ where $c_1,\dots,c_\ell\in M_{m,n}$, then $c^{(k)}=
\Bigg[\begin{smallmatrix}
  c_1^{(k)}\svdots c_\ell^{(k)}
\end{smallmatrix}\Bigg]$.
The utility of this notation is revealed by observing that if
$T=a\odot b$ and $s,t\in \bN$, then $T_{s,t}=a^{(s)}\odot b^{(t)}$.

\begin{prop}\label{prop:cb}
  Let\/ $\ell,n\in \bN$, let\/ $k=\min\{\ell,n\}$ and
  let\/ $K=\min\{\ell^2,n\}$.  If\/~$T\colon M_{\ell, n}\to M_{\ell,
    n}$ is a right $D_n$-module map, then \[\|T\|_{cb}=\|T_{k,1}\|
  \leq \sqrt K\hsnorm{T}.\] In particular,
  $\|T\|_{cb}=\|T_{n,1}\|\leq\sqrt n\hsnorm{T}$.
\end{prop}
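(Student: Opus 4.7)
By Proposition~\ref{prop:col}, $\|T\|_{cb}=\sup_{s\geq 1}\|T_{s,1}\|$, so the equality $\|T\|_{cb}=\|T_{k,1}\|$ reduces to showing $\|T_{s,1}\|\leq\|T_{k,1}\|$ for every $s\geq 1$; for $s\leq k$ this is immediate by padding with zero rows. The upper bound $\|T_{k,1}\|\leq\sqrt{K}\,\hsnorm{T}$ follows at once from Lemma~\ref{lem:norms} applied to $T_{k,1}\colon M_{k\ell,n}\to M_{k\ell,n}$, using that $\min\{k\ell,n\}=K$ in both cases $\ell\leq n$ and $\ell>n$, together with Remark~\ref{rk:hsnorm}, which gives $\hsnorm{T_{k,1}}=\hsnorm{T}$. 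Hence the substantive step is establishing $\|T_{s,1}\|\leq\|T_{k,1}\|$ when $s>k$, and the argument splits naturally according to whether $\ell\leq n$ (so $k=\ell$) or $\ell>n$ (so $k=n$).

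In the case $\ell\leq n$, I would use a rotation-and-truncation argument. For every unitary $W_1\in M_s$, the ampliation $W_1\otimes I_\ell$ commutes with each $a_j^{(s)}$ and hence with $T_{s,1}$. Given $y\in M_{s\ell,n}$ with $\|y\|\leq 1$ and a unit vector $\xi\in\bC^n$ with $\|T_{s,1}(y)\xi\|=\|T_{s,1}(y)\|$, set $v=T_{s,1}(y)\xi\in\bC^{s\ell}$ and view it as a matrix $V\in M_{s,\ell}$ via $\bC^{s\ell}=(\bC^\ell)^s$. Since $V$ has only $\ell=k$ columns, $\rank(V)\leq k$, so I can choose $W_1\in U(s)$ whose last $s-k$ rows lie in the orthogonal complement of the column span of $V$; then the last $s-k$ rows of $W_1V$ vanish, so $(W_1\otimes I_\ell)v$ is supported in the first $k\ell$ coordinates. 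Let $z\in M_{k\ell,n}$ be the first $k$ block rows of $(W_1\otimes I_\ell)y$. Using the identity $P_ka_j^{(s)}=a_j^{(k)}P_k$, where $P_k\colon\bC^{s\ell}\to\bC^{k\ell}$ is the projection onto the first $k\ell$ coordinates, I obtain $T_{k,1}(z)=P_kT_{s,1}((W_1\otimes I_\ell)y)$, and therefore $\|T_{k,1}(z)\xi\|=\|v\|=\|T_{s,1}(y)\|$. Since $\|z\|\leq\|y\|\leq 1$, this yields $\|T_{k,1}\|\geq\|T_{s,1}(y)\|$.

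In the case $\ell>n$, $\rank(V)$ may exceed $k=n$, so the rotation approach fails, and I would switch to an SVD/completely-positive approach. Write $y=u\sigma v^*$ with $u\in M_{s\ell,r}$ an isometry, $\sigma\in M_r$ diagonal positive ($r=\rank(y)\leq n$) of norm $\|y\|\leq1$, and $v\in M_{n,r}$ an isometry. A short calculation using the block decomposition $u=[u_1;\ldots;u_s]$ yields
\[T_{s,1}(y)^*T_{s,1}(y)=\sum_{j_1,j_2}b_{j_1}v\sigma\,\Phi(a_{j_1}^*a_{j_2})\,\sigma v^*b_{j_2},\]
where $\Phi\colon M_\ell\to M_r$ is the unital CP map $\Phi(X)=\sum_i u_i^*Xu_i$. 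The right-hand side is linear in $\Phi$, so its operator norm is a convex function of $\Phi$; by Bauer's maximum principle the supremum over the compact convex set of unital CP maps $M_\ell\to M_r$ is attained at an extreme point, which by Choi's theorem has Kraus rank at most $r\leq n=k$. Realizing such an extremal $\Phi$ with exactly $k$ Kraus operators (padding with zeros if needed) produces $y'=u'\sigma v^*\in M_{k\ell,n}$ with $\|y'\|\leq 1$ and $\|T_{k,1}(y')\|\geq\|T_{s,1}(y)\|$. The main obstacle is this second case: one must verify the CP-structure identity, invoke Choi's bound on the Kraus rank of extreme unital CP maps between matrix algebras of different sizes, and observe that even though the constraint ``Kraus rank $\leq s$'' is not convex, the sup over all unital CP $\Phi$ is attained at an extreme point and so coincides with the sup over $\Phi$ of Kraus rank $\leq k$, since $r\leq k$. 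Once these points are settled, the first case and the Hilbert--Schmidt bound are routine from the machinery already developed.
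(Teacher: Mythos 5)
Your plan is correct, but it reaches the conclusion by a genuinely different route from the paper. The paper's proof is a short application of the elementary-operator machinery of~\cite{timoney03}: by Remark~\ref{rk:rep} one takes a representation with $\|T\|_{cb}=\tfrac12(\|a\|^2+\|b\|^2)$; \cite[Theorem~3.3]{timoney03} gives $\|T\|_{cb}=\|T_n\|$, \cite[Proposition~2.4]{timoney03} gives convexity of $\Wme((a^*)^{(\ell)})$ and hence $\|T\|_{cb}=\|T_\ell\|$, so $\|T\|_{cb}=\|T_k\|$, and Lemma~\ref{lem:smith} converts this to $\|T_{k,1}\|$; the final bound is then exactly your last step (Lemma~\ref{lem:norms} plus Remark~\ref{rk:hsnorm}, with $\min\{k\ell,n\}=K$ in both cases, as you say). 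You instead stay with column ampliations throughout (Proposition~\ref{prop:col}) and prove $\|T_{s,1}\|\leq\|T_{k,1}\|$ by hand, and both of your cases check out. For $k=\ell$: left multiplication by $W_1\otimes I_\ell$ does commute with $T_{s,1}$ since it commutes with each $I_s\otimes a_j$, the projection identity $P_k(I_s\otimes a_j)=(I_k\otimes a_j)P_k$ gives $T_{k,1}(z)=P_k(W_1\otimes I_\ell)T_{s,1}(y)$, and the reshaped vector $V\in M_{s,\ell}$ has column span of dimension at most $\ell=k$, so the rotation exists and the norm of $v$ survives the compression. For $k=n$: the identity $T_{s,1}(y)^*T_{s,1}(y)=\sum_{j_1,j_2}b_{j_1}v\sigma\Phi(a_{j_1}^*a_{j_2})\sigma v^*b_{j_2}$ with $\Phi(X)=\sum_i u_i^*Xu_i$ is a direct computation, $\Phi$ is unital because $u$ is an isometry, the norm is convex and continuous in $\Phi$, Bauer's maximum principle applies to the compact convex set of unital completely positive maps $M_\ell\to M_r$, Choi's theorem bounds the Kraus rank of an extreme point by $r=\operatorname{rank}(y)\leq n=k$, and padding to exactly $k$ Kraus operators produces an isometry $u'\in M_{k\ell,r}$ and hence $y'=u'\sigma v^*$ with $\|y'\|\leq1$ and $\|T_{k,1}(y')\|\geq\|T_{s,1}(y)\|$. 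Your case split is genuinely needed, since when $r>\ell$ an extreme unital completely positive map into $M_r$ may require more than $\ell$ Kraus operators. As for what each approach buys: the paper's argument is shorter because the matrix numerical range results are already in play elsewhere (they are what give $\|T\|_{cb}=\tfrac12(\|a\|^2+\|b\|^2)$ in the first place), whereas yours avoids $\Wme$ entirely at the cost of importing Choi's description of extreme unital completely positive maps and Bauer's maximum principle, and it makes transparent the rank/Kraus-rank counting behind the saturation of the cb norm at the $k$th column ampliation.
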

\begin{proof}
  By Remark~\ref{rk:rep},~$T$ is an elementary operator, and there are
  matrices $a_1,\dots,a_n\in M_\ell$ and $b_1,\dots,b_n\in D_n$ such that
  $Tx=\sum_{j=1}^na_jxb_j$
  and \[\|T\|_{cb}=\tfrac12(\|a\|^2+\|b\|^2)\quad\text{where}\quad
  a=[a_1\ \dots\ a_n]\text{ and }b=
  \begin{bmatrix}
    b_1\\\vdots\\b_n
  \end{bmatrix}.\] By~\cite[Theorem~3.3]{timoney03} we have
  $\|T\|_{cb}=\|T_n\|$.

  By Lemma~\ref{lem:conv}, $\Wme(b)$ is convex.
  By~\cite[Proposition~2.4]{timoney03}, the set $\Wme((a^*)^{(\ell)})$ is convex,
  so intersects $\Wme(b^{(\ell)})$, so $\|T\|_{cb}=\|T_\ell\|$. Hence
  \[\|T_k\|=\min\{\|T_\ell\|,\|T_n\|\}=\|T\|_{cb}.\]

  By Lemma~\ref{lem:smith}, $\|T\|_{cb}=\|T_{k,1}\|$.  By
  Remark~\ref{rk:hsnorm}, the map $T_{k,1}\in\L_{D_n}(M_{k\ell, n})$
  satisfies $\hsnorm{T_{k,1}}=\hsnorm{T}$, hence $\|T_{k,1}\|\leq
  \sqrt K\hsnorm{T}$ by Lemma~\ref{lem:norms}.
\end{proof}

\section{Upper bounds for $C(m,n)$}\label{sec:upperbounds}

For $n,m\in\bN\cup\{\infty\}$, recall that
\[C(m,n)=\sup\{\|T\|_{cb} \colon T\in \L_{D_n}(M_{m, n}),\ \|T\|\leq
1\}.\] We have $C(m,1)=C(1,n)=C(m,2)=1$ by Remark~\ref{rk:one} and
Theorem~\ref{thm:2col}.

We will now give some upper bounds for $C(m,n)$.

\begin{prop}\label{prop:Cincr}
  If $m\leq m'$ and $n\leq n'$ then $C(m,n)\leq C(m',n')$. In other
  words, $C$ is an increasing function for the product order.
\end{prop}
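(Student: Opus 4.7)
The plan is to show that any map witnessing a lower bound for $C(m,n)$ can be lifted to a map witnessing at least the same bound for $C(m',n')$. Concretely, given $T\in\L_{D_n}(M_{m,n})$ with $\|T\|\le 1$, I will construct an extension $\tilde T\in\L_{D_{n'}}(M_{m',n'})$ with $\|\tilde T\|\le 1$ and $\|\tilde T\|_{cb}\ge \|T\|_{cb}$; taking the supremum over $T$ then yields $C(m,n)\le C(m',n')$.

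Let $\iota\colon M_{m,n}\to M_{m',n'}$ denote the isometric embedding placing a matrix in the upper-left corner (padding with zeros), and let $\pi\colon M_{m',n'}\to M_{m,n}$ denote the contractive compression reading off the upper-left $m\times n$ block. Define
\[\tilde T(x) = \iota(T(\pi(x)))\quad\text{for }x\in M_{m',n'}.\]
To check the right module property, fix $d\in D_{n'}$ and write $d'\in D_n$ for its upper-left $n\times n$ block. Since $d$ is diagonal, $\pi(xd)=\pi(x)d'$, so
\[\tilde T(xd) = \iota(T(\pi(x)d'))=\iota(T(\pi(x))d');\]
on the other hand, since $\iota(T(\pi(x)))$ has nonzero entries only in its first $n$ columns, right multiplication by $d$ is the same as right multiplication by the diagonal matrix with $d'$ in its upper-left block, so $\iota(T(\pi(x))d')=\iota(T(\pi(x)))d=\tilde T(x)d$. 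Thus $\tilde T\in\L_{D_{n'}}(M_{m',n'})$.

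For the norm, $\|\tilde T(x)\| = \|T(\pi(x))\|\le \|T\|\,\|\pi(x)\|\le \|T\|\,\|x\|$, so $\|\tilde T\|\le \|T\|\le 1$. For the cb-norm, the embedding $\iota$ ampliates to an isometry $\iota_k\colon M_k(M_{m,n})\to M_k(M_{m',n'})$ for every $k$, and by construction $\tilde T_k\circ \iota_k = \iota_k\circ T_k$. Hence for any $y\in M_k(M_{m,n})$,
\[\|\tilde T_k(\iota_k(y))\| = \|\iota_k(T_k(y))\|=\|T_k(y)\|,\qquad \|\iota_k(y)\|=\|y\|,\]
which gives $\|\tilde T_k\|\ge \|T_k\|$ and therefore $\|\tilde T\|_{cb}\ge \|T\|_{cb}$.

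The only substantive point is the verification that the extension respects the right $D_{n'}$-module structure even though we are enlarging the diagonal algebra; this works precisely because diagonal matrices do not couple the coordinates inside the first $n$ indices with those outside, so the ``extra'' diagonal entries of $d$ act trivially on the padded matrix. Everything else is a routine isometry check, and no difficult obstacles are expected.
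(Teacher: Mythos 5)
Your construction $\tilde T=\iota\circ T\circ\pi$ is exactly the map $T'(x)=\begin{smallbmatrix}T(qxp)&0\\0&0\end{smallbmatrix}$ used in the paper's proof, and your verifications (the module property via the diagonal structure of $d$, $\|\tilde T\|\le\|T\|$, and $\|\tilde T\|_{cb}\ge\|T\|_{cb}$ via the complete isometry $\iota$) are correct and suffice, since only the inequality on cb-norms is needed to pass to the supremum. So the proposal is correct and takes essentially the same approach as the paper, merely spelling out details the paper calls easy to see.
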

\begin{proof}
  Given $T\in \L_{D_{n}}(M_{m, n})$ with $\|T\|=1$, let
  $T'\in\L_{D_{n'}}(M_{m', n'})$ be the map
  \[T'(x)=
  \begin{bmatrix}
    T(qxp)& 0_{m\times (n'-n)}\\0_{(m'-m)\times n}& 0_{(m'-m)\times (n'-n)}
  \end{bmatrix},\quad x\in M_{m',n'}\] where 
  \[q=[I_m\ 0_{m\times (m'-m)}]\in
  M_{m,m'}\quad\text{and}\quad p=
  \begin{bmatrix}
    I_n\\0_{(n'-n)\times n}
  \end{bmatrix}\in M_{n',n}.\] 
  It is easy to see that $\|T'\|=\|T\|=1$ and
  $\|T\|_{cb}=\|T'\|_{cb}$. Hence $C(m,n)\leq C(m',n')$.
\end{proof}

\begin{quesc}
  Do we have $C(m,n)C(s,t)\leq C(ms,nt)$ for all $m,n,s,t\ge1$? 
  A proof purporting to give a positive answer appeared
  in~\cite{lt-laa}, which we have since realised is
  incorrect~\cite{lt-corr}. 
\end{quesc}

\begin{lem}\label{lem:2colbound}
  Let $n\in\bN$ with $n\ge2$. If $y\in M_{m,n}$ and
  $\|y(e_ie_i^*+e_je_j^*)\|\leq 1$ for $1\leq i<j\leq n$, then
  $\|y\|\leq \sqrt{n/2}$.
\end{lem}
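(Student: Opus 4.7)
The plan is to rephrase the hypothesis as a family of constraints on $A := y^*y \in M_n$, which is positive semi-definite with $\|A\| = \|y\|^2$. Since $P_{ij} := e_ie_i^* + e_je_j^*$ is an orthogonal projection, the condition $\|y(e_ie_i^* + e_je_j^*)\| \le 1$ is equivalent to $\|P_{ij}AP_{ij}\| \le 1$, which (as $P_{ij}AP_{ij}$ is positive semi-definite and supported on the range of $P_{ij}$) is in turn equivalent to the operator inequality $P_{ij}AP_{ij} \preceq P_{ij}$ in $M_n$. So the goal becomes: show $\|A\| \le n/2$ given that $P_{ij}AP_{ij} \preceq P_{ij}$ for all $1 \le i<j \le n$.

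Next, I would sum these pairwise inequalities over all $\binom{n}{2}$ pairs. A direct calculation (expanding $P_{ij}AP_{ij}$ into four terms and counting how often each $e_ke_k^*$ appears) gives
\[
\sum_{i<j} P_{ij}AP_{ij} = A + (n-2)\diag(A), \qquad \sum_{i<j} P_{ij} = (n-1)I_n,
\]
hence the single operator inequality $A + (n-2)\diag(A) \preceq (n-1)I_n$. Evaluating at a unit vector $\xi \in \bC^n$ and writing $\alpha = \sum_{i=1}^n |\xi_i|^2 A_{ii}$ yields the first bound
\[
\xi^* A \xi \le n-1 - (n-2)\alpha.
\]

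For the second bound, observe that $\xi^* A \xi = \|y\xi\|^2 = \|\sum_j \xi_j y_j\|^2$ where $y_j = ye_j$. By the triangle inequality followed by Cauchy--Schwarz,
\[
\Bigl\|\sum_j \xi_j y_j\Bigr\|^2 \le \Bigl(\sum_j |\xi_j|\,\|y_j\|\Bigr)^2 \le n \sum_j |\xi_j|^2 \|y_j\|^2 = n\alpha,
\]
since $A_{jj} = \|y_j\|^2$. Combining the two bounds,
\[
\xi^* A \xi \le \min\bigl(n\alpha,\, n-1 - (n-2)\alpha\bigr) \le n/2,
\]
where the second inequality follows because the two linear functions of $\alpha$ cross at $\alpha = 1/2$, where both equal $n/2$ (for $n=2$ the claim is immediate from a single pairwise constraint). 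Taking the supremum over unit $\xi$ gives $\|y\|^2 = \|A\| \le n/2$.

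The main obstacle is that neither estimate alone is sharp: summing the pairwise constraints in the naive way yields only $\|y\|_2^2 \le n$ and hence $\|y\|^2 \le n$, off by a factor of $2$; and Cauchy--Schwarz alone gives nothing uniform. The crux is noticing that the operator inequality produced by summing over pairs controls $\xi^*A\xi$ in terms of the diagonal quadratic form $\alpha$, while Cauchy--Schwarz controls $\xi^*A\xi$ by $n\alpha$ from the other side; the two estimates are perfectly calibrated to meet at the extremal configuration (all columns of $y$ equal, of norm $1/\sqrt{2}$), which also shows the bound is sharp.
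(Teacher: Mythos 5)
Your proof is correct, but it takes a genuinely different (and longer) route than the paper. The paper also sums the projections $p_{ij}=e_ie_i^*+e_je_j^*$ over pairs, but it conjugates on the \emph{row} side: since $p_{ij}$ is a projection, $\|yp_{ij}y^*\|=\|yp_{ij}\|^2\le1$, and $\sum_{i<j}yp_{ij}y^*=y\bigl(\sum_{i<j}p_{ij}\bigr)y^*=(n-1)\,yy^*$, so the triangle inequality alone gives $(n-1)\|y\|^2\le\binom n2$, i.e.\ $\|y\|\le\sqrt{n/2}$ --- a three-line argument with the sharp constant built in. You instead compress $A=y^*y$ on the column side, which produces the extra term $(n-2)\diag(A)$ in $\sum_{i<j}P_{ij}AP_{ij}=A+(n-2)\diag(A)\preceq(n-1)I_n$, and you then need the second estimate $\xi^*A\xi\le n\alpha$ (Cauchy--Schwarz against the diagonal) and the crossing-point optimization at $\alpha=1/2$ to recover $n/2$; all of these steps check out, including the identity for the sum of compressions and the case $n=2$. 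What your version buys is an explicit picture of the trade-off and of the extremal configuration (all columns equal of norm $1/\sqrt2$), which the paper's proof does not exhibit; what it costs is length, and your closing remark that ``summing the pairwise constraints in the naive way'' is off by a factor of $2$ applies only to your choice of compression --- summing $yp_{ij}y^*$ (equivalently, working with $yy^*$ rather than $y^*y$) is exactly the naive summation that is already sharp.
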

\begin{proof}
  Let $p_{ij}=e_ie_i^*+e_je_j^*$ for $1\leq i<j\leq
  n$.  Since each
  $p_{ij}$ is a projection, we have
  $\|yp_{ij}y^*\|=\|(yp_{ij})(yp_{ij})^*\|=\|yp_{ij}\|^2\leq
  1$. Moreover, \[\sum_{1\leq i<j\leq n} p_{ij}=(n-1)I_n.\] Hence
  \begin{align*} \|y\|=\sqrt{\|yy^*\|}=\sqrt{\frac1{n-1}\left\|\sum_{1\leq
        i<j\leq n}yp_{ij}y^*\right\|}
    &\leq\sqrt{\frac1{n-1}\sum_{1\leq i<j\leq n} \|yp_{ij}y^*\|}\\
    &\leq \sqrt{\frac1{n-1}\binom n2} =\sqrt{ \frac n2}.\qedhere
  \end{align*}
\end{proof}

The following simple estimate applies to arbitrary linear maps between
operator spaces, and is analogous to the well-known bound $\|T_n\|\leq
n\|T\|$ (\cite[Exercise~3.11]{paulsen}, due to~Smith).
\begin{lem}\label{lem:rootbound}
  Let $k,m,n\in\bN$. For any $T\in \L(M_{m,n})$, we have
  \[\|T_{k,1}\|\leq \sqrt k\|T\|.\]
\end{lem}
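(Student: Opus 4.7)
The plan is to exploit the column structure of $M_{k,1}(M_{m,n})$ together with the C*-identity $\|y\|^2=\|y^*y\|$ to reduce everything to a single-column estimate.

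First I would fix $x\in M_{k,1}(M_{m,n})$ with $\|x\|\leq 1$ and write $x=\Bigg[\begin{smallmatrix}x_1\svdots x_k\end{smallmatrix}\Bigg]$ for some $x_j\in M_{m,n}$. Then $T_{k,1}(x)=\Bigg[\begin{smallmatrix}Tx_1\svdots Tx_k\end{smallmatrix}\Bigg]$, so
\[
\|T_{k,1}(x)\|^2=\|T_{k,1}(x)^*T_{k,1}(x)\|=\bigg\|\sum_{j=1}^k(Tx_j)^*(Tx_j)\bigg\|.
\]

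Next I would apply the triangle inequality followed by the bound $\|Tx_j\|\leq\|T\|\,\|x_j\|$, obtaining
\[
\|T_{k,1}(x)\|^2\leq\sum_{j=1}^k\|Tx_j\|^2\leq\|T\|^2\sum_{j=1}^k\|x_j\|^2.
\]
The remaining ingredient is that each $\|x_j\|^2=\|x_j^*x_j\|\leq\|\sum_{i=1}^k x_i^*x_i\|=\|x^*x\|=\|x\|^2\leq 1$, so $\sum_{j=1}^k\|x_j\|^2\leq k$. Combining these gives $\|T_{k,1}(x)\|\leq\sqrt k\,\|T\|$, and taking the supremum over $x$ yields the claim.

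There is essentially no obstacle here: the only mild point is the passage from the operator-norm square of a column to the sum of operator-norm squares of its entries, but this follows at once from the C*-identity and positivity (a sum of positive operators dominates each summand in the operator order, hence in norm).
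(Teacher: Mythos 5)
Your proof is correct and follows essentially the same route as the paper: expand $\|T_{k,1}(x)\|^2$ via the C*-identity as $\|\sum_j (Tx_j)^*(Tx_j)\|$, bound this by $\sum_j\|Tx_j\|^2\leq\|T\|^2\sum_j\|x_j\|^2\leq k\|T\|^2$ using $\|x_j\|\leq\|x\|$. The only cosmetic difference is that the paper picks a norm-attaining $x$ while you take a supremum, which changes nothing.
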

\begin{proof}
  There is $x \in M_{k ,1}(M_{m,n})$, say $x = \begin{bmatrix} x_1 \\ x_2 \\
    \vdots \\ x_k \end{bmatrix}$ (where $x_j \in M_{m,n}$ for $1 \leq
  j \leq k $), with $\|x\| = 1$ and $\|T_{k ,1}(x)\| =\|T_{k,1}\|$.
  Clearly we can write $T_{k , 1}(x) = \begin{bmatrix} Tx_1 \\ Tx_2 \\
    \vdots \\ Tx_k \end{bmatrix}$, and since $\|x_j\|\leq \|x\|\leq 1$
  for $1\le j\leq n$, we have
  \begin{align*}
    \|T_{k,1}\|&=\|T_{k,1}(x)\|\\&=\sqrt{\|T_{k,1}(x)^*T_{k,1}(x)\|}\\
    &= \sqrt{\left\|\sum_{j=1}^k T(x_j)^*T(x_j)\right\|}\\
    &\leq \sqrt{\sum_{j=1}^k \|T(x_j)^*T(x_j)\|}\\
    &=\sqrt{\sum_{j=1}^k \|T(x_j)\|^2}\\
    &\leq \|T\|\sqrt{\sum_{j=1}^k \|x_j\|^2}\\
    &\leq \sqrt k\|T\|.\qedhere
  \end{align*}
\end{proof}

\begin{thm}\label{thm:upperbound}
  If~$m,n\in\bN$ and $n\ge2$ then $C(m,n)\leq \sqrt{\min\{m,n/2\}}$.
\end{thm}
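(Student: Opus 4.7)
My plan is to establish two separate bounds on $\|T\|_{cb}/\|T\|$ for any right $D_n$-module map $T$ on $M_{m,n}$, and take the minimum. The first bound, $\|T\|_{cb} \leq \sqrt{\min\{m,n\}}\,\|T\|$, is immediate: Proposition~\ref{prop:cb} gives $\|T\|_{cb} = \|T_{k,1}\|$ with $k = \min\{m,n\}$, and Lemma~\ref{lem:rootbound} then yields $\|T_{k,1}\| \leq \sqrt{k}\,\|T\|$.

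The second, $\|T\|_{cb} \leq \sqrt{n/2}\,\|T\|$, is the substantive half and is where the hypothesis $n \geq 2$ enters. For each pair $1 \leq i < j \leq n$, let $p_{ij} = e_ie_i^* + e_je_j^* \in D_n$. Right multiplication by $p_{ij}$ identifies an isometric copy of $M_{m,2}$ inside $M_{m,n}$, and since $T$ is a right $D_n$-module map, this copy is invariant under $T$; the restriction $\tilde T_{ij}$ is a right $D_2$-module map on $M_{m,2}$ with $\|\tilde T_{ij}\| \leq \|T\|$. By Theorem~\ref{thm:2col}, $\|\tilde T_{ij}\|_{cb} = \|\tilde T_{ij}\| \leq \|T\|$.

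Now for any $k \geq 1$ and any $x \in M_{k,1}(M_{m,n})$ with $\|x\| \leq 1$, set $y = T_{k,1}(x) \in M_{km,n}$. Applying the right $D_n$-module property of $T$ entrywise gives $y\,p_{ij} = T_{k,1}(x\,p_{ij}) = (\tilde T_{ij})_{k,1}(x\,p_{ij})$, so
\[
\|y\,p_{ij}\| \leq \|\tilde T_{ij}\|_{cb}\,\|x\,p_{ij}\| \leq \|T\|
\]
for every pair $i<j$. Applying Lemma~\ref{lem:2colbound} to $y/\|T\|$, viewed as an element of $M_{km,n}$, produces $\|y\| \leq \sqrt{n/2}\,\|T\|$, hence $\|T_{k,1}\| \leq \sqrt{n/2}\,\|T\|$; Proposition~\ref{prop:col} then gives $\|T\|_{cb} \leq \sqrt{n/2}\,\|T\|$.

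Combining the two bounds yields $\|T\|_{cb} \leq \sqrt{\min\{m,n,n/2\}}\,\|T\| = \sqrt{\min\{m, n/2\}}\,\|T\|$, so $C(m,n) \leq \sqrt{\min\{m,n/2\}}$. The main care required is in the two-column step: checking that $\tilde T_{ij}$ is well-defined on the $(i,j)$-column subspace and that the $k$-fold ampliation $T_{k,1}$ commutes with right multiplication by $p_{ij}$ in the manner claimed, both of which follow from the right $D_n$-module property applied entry by entry.
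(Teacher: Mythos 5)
Your proof is correct and follows essentially the same route as the paper's: the $\sqrt{m}$ bound via Proposition~\ref{prop:cb} and Lemma~\ref{lem:rootbound}, and the $\sqrt{n/2}$ bound by compressing $T$ to each pair of columns (your $\tilde T_{ij}$ is exactly the paper's two-column map $S$ with column operators $a_i,a_j$), invoking Theorem~\ref{thm:2col} and then Lemma~\ref{lem:2colbound}. The only difference is organisational: you bound $\|T_{k,1}\|$ uniformly in $k$ and finish with Proposition~\ref{prop:col}, whereas the paper fixes a norm-attaining $x$ at the level given by Proposition~\ref{prop:cb}; both are fine.
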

\begin{proof}
  Let $T\in\L_{D_n}(M_{m,n})$ with $\|T\|=1$ and $\|T\|_{cb}=C(m,n)$.
  By Proposition~\ref{prop:cb} and Lemma~\ref{lem:rootbound}, we
  have \[C(m,n)=\|T\|_{cb}=\|T_{m,1}\|\leq \sqrt m\|T\|=\sqrt
  m\] so it only remains to show that $C(m,n)\leq \sqrt{n/2}$.

  Since $\|T\|_{cb}=\|T_{n,1}\|$ by Proposition~\ref{prop:cb}, there
  is $x\in M_{m,1}(M_{m,n})$ with $\|x\|=1$ such that $y=T_{m,1}(x)$
  has $\|y\|=C(m,n)$. Let $a_1,\dots,a_n$ be the column operators
  of~$T$, so that
  \[ T=[a_1\ \dots\ a_n]\odot
  \begin{bmatrix}
    e_1e_1^*\\\vdots\\e_ne_n^*
  \end{bmatrix}.\] 
  Given $i,j$ with $1\leq i<j\leq n$,
  consider \[S=[a_i\ a_j]\odot
  \begin{bmatrix}
    e_1e_1^*\\ e_2e_2^*
  \end{bmatrix}\in
  \L_{D_2}(M_{m,2}).\] Clearly, $\|S\|\leq \|T\|$. If $w=[xe_i\
  xe_j]\in M_{m^2, 2}$, then $\|w\|\leq \|x\|=1$. Moreover,
  $y(e_ie_i^*+e_je_j^*)\in M_{m^2, n}$ can be recovered from
  $S_{m,1}(w)\in M_{m^2, 2}$ by padding with $n-2$ columns of zeros,
  so $\|S_{m,1}(w)\|=\|y(e_ie_i^*+e_je_j^*)\|$. Since
  $\|S\|=\|S\|_{cb}$ by Theorem~\ref{thm:2col}, we have
  \[ 1=\|T\|\geq \|S\|=\|S\|_{cb} \geq
  \|S_{m,1}(w)\| = \|y(e_ie_i^*+e_je_j^*)\|.\] By
  Lemma~\ref{lem:2colbound},
  \[ C(m,n)=\|y\|\leq \sqrt{n/2}.\qedhere\]
\end{proof}

Fix~$n\in\bN$. The sequence~$C(2,n),C(3,n),C(4,n),\dots$ is
increasing by Proposition~\ref{prop:Cincr}. We will now show that it
is eventually constant.

In~\cite[Theorem~1.3]{timoney07}, the second author establishes an
exact formula for the norm of an elementary operator $T$, which we now
recall. If $\ell\in\bN$ and $X,Y$ are positive semi-definite elements
of $M_\ell$, then the \emph{tracial geometric mean} of~$X$ and~$Y$ is
\[\tgm(X,Y)=\|\sqrt {X}\,\sqrt{Y}\|_1=\trace\sqrt{\sqrt X Y\sqrt
  X}\] where $\|\cdot\|_1$ denotes the trace-class norm
on~$M_\ell$. If~$T$ is an elementary operator on~$M_m$ which is
represented by $a\in M_{1,\ell}(M_m)$ and $b\in M_{\ell,1}(M_m)$, then
the formula is:
\begin{equation}
  \|T\|=\sup\{\tgm(X,Y)\colon X\in \Wm(a^*),\ Y\in \Wm(b)\}.
  \label{eq:tgm}\tag{$\dagger$}
\end{equation}
In fact, a generalisation of this formula is shown to hold for
elementary operators on any C*-algebra~$A$.

We need to show that~(\ref{eq:tgm}) holds in the rectangular case,
too. If~$T$ is an elementary operator on~$M_{m,n}$ with $n> m$ which
is represented by~$a\in M_{1,\ell}(M_m)$ and $b\in M_{\ell,1}(M_n)$,
consider the map \[\tilde T\colon M_n\to M_n,\qquad x\mapsto
\begin{bmatrix}
  T(px)\\0_{(n-m)\times n}
\end{bmatrix},
\] where $p\in \B(\bC^n,\bC^m)$ is the orthogonal
projection onto the linear span of 
$\{e_1,\dots,e_m\}$, which is viewed simultaneously 
as~$\bC^m$ 
and as a subspace 
of~$\bC^n$.  That is, $\tilde T$ is ``$T$ applied to the first 
$m$ rows, and zero on the remaining 
rows''. Clearly, $\|T\|=\|\tilde T\|$. If $a=[a_1\ \dots\ a_\ell]$ and
$\tilde a=[\tilde a_1\ \dots\ \tilde a_\ell]$ 
where $\tilde a_j=
\begin{smallbmatrix}
  a_j&0\\0&0
\end{smallbmatrix}\in M_n$ is ``$a_j$ padded with $n-m$ zero
rows and columns'', then $\tilde T$ is represented by $\tilde a,b$,
and 
$\Wm(\tilde a^*)=\{rX\colon r\in [0,1],\ X\in \Wm(a^*)\}$. 
So, since $\tgm(rX,Y)=\sqrt r\tgm(X,Y)$ for $r\in [0,1]$,
\[\|T\|=\|\tilde T\|=\sup\{\tgm(X,Y)\colon X\in \Wm(a^*),\ Y\in
\Wm(b)\}.\] If $T$ is an elementary operator on $M_{m,n}$ with $n<m$,
then~(\ref{eq:tgm}) still holds, as may be seen by considering $T^*$.

\begin{rk}
  The tracial geometric mean (or, sometimes, its square) is called
  \emph{fidelity} in quantum information theory~\cite{Q1,Q2,Q3,Q4},
  where it is interpreted as a measure of the closeness of two quantum
  states (positive semi-definite trace-class operators with
  trace~$1$).
\end{rk}

\begin{thm}\label{thm:tgmcols}
  If $1\leq n< m\leq \infty$, then $C(m,n)=C(n,n)$.
\end{thm}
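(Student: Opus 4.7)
\smallskip

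The plan is to prove the nontrivial direction $C(m,n)\leq C(n,n)$; the reverse inequality is immediate from Proposition~\ref{prop:Cincr}. So, given any $T\in\L_{D_n}(M_{m,n})$ with $\|T\|=1$, I want to construct $T'\in\L_{D_n}(M_{n,n})$ with $\|T'\|\leq\|T\|$ and $\|T'\|_{cb}\geq\|T\|_{cb}$; this yields $\|T\|_{cb}/\|T\|\leq C(n,n)$.

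By Remark~\ref{rk:rep} and Proposition~\ref{prop:cb}, write $T=a\odot b$ with column operators $a_j\in M_m$ and $b_j=e_je_j^*$, and note that $\|T\|_{cb}=\|T_{n,1}\|$ where $T_{n,1}=a^{(n)}\odot b$. Applying formula~(\ref{eq:tgm}) (now valid in the rectangular setting by the preceding discussion) to both $T$ and $T_{n,1}$ gives
\begin{align*}
\|T\|      &=\sup\{\tgm(X,Y):X\in\Wm(a^*),\ Y\in\Wm(b)\},\\
\|T\|_{cb} &=\sup\{\tgm(X,Y):X\in\Wm(a^{(n)*}),\ Y\in\Wm(b)\}.
\end{align*}
A direct block-wise calculation using the diagonal structure of $a^{(n)}$ (parallel to Lemma~\ref{lem:conv}) shows $\Wm(a^{(n)*}) = \{\sum_{k=1}^n t_k Q(a^*,\eta_k):t_k\geq 0,\ \sum t_k=1,\ \eta_k\in\bC^m,\ \|\eta_k\|=1\}$, the set of convex combinations of at most $n$ elements of $\Wm(a^*)\subseteq M_n^+$. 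Hence both norms depend on $a$ only through the subset $\Wm(a^*)$ of~$M_n^+$.

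Fix a representation $X_0=\sum_k t_k Q(a^*,\xi_0^{(k)})$, with unit $\xi_0^{(k)}\in\bC^m$ and $\sum t_k=1$, that attains $\|T\|_{cb}$, and set $F:=\spn\{a_j^*\xi_0^{(k)}:1\leq j,k\leq n\}\subseteq\bC^m$. In the ``good'' case $\dim F\leq n$, fix an isometry $P\colon F\to\bC^n$ and define $a_j'\in M_n$ by prescribing the $k$th column of $(a_j')^*$ to be $Pa_j^*\xi_0^{(k)}$. Since $a_j^*\xi_\eta\in F$ for $\xi_\eta:=\sum_k c_k\xi_0^{(k)}$ (where $\eta=\sum_k c_k e_k$), a short computation gives
\[
Q(a'^*,\eta)=\bigl[\langle Pa_i^*\xi_\eta,Pa_j^*\xi_\eta\rangle\bigr]_{ij}=Q(a^*,\xi_\eta),
\]
and $\|\xi_\eta\|^2 = \langle Mc,c\rangle$ with $M=[\langle\xi_0^{(l)},\xi_0^{(k)}\rangle]_{kl}$ satisfies $\|\xi_\eta\|^2\leq\|M\|\leq\trace M=\sum_k\|\xi_0^{(k)}\|^2=1$. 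Because $\tgm(tX,Y)=\sqrt{t}\,\tgm(X,Y)$, this forces $\|T'\|\leq\|T\|$, while $X_0=\sum_k t_k Q(a'^*,e_k)\in\Wm(a'^{(n)*})$ gives $\|T'\|_{cb}\geq\|T\|_{cb}$, closing the argument in this case.

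The main obstacle is the ``bad'' case $\dim F>n$, in which the $\xi_0^{(k)}$ force the vectors $a_j^*\xi_0^{(k)}$ to fill a subspace too large to embed isometrically into $\bC^n$. I expect this to be handled by refining the choice of the representation of $X_0$: by Carath\'eodory-type reasoning within $\Wme(a^{(n)*})$ (combined with the convexity results for ampliated matrix numerical ranges from~\cite{timoney03}), one should be able to replace $X_0$'s original representation with one whose parameters $\xi_0^{(k)}$ lie in an at-most-$n$-dimensional common subspace on which $a_1^*,\ldots,a_n^*$ have their images jointly spanning no more than $n$ dimensions. Justifying this reduction is the technical heart of the proof.

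Finally, the case $m=\infty$ follows from the finite case by a compression argument parallel to Corollary~\ref{cor:infty2}: a hypothetical $T\colon M_{\infty,n}\to M_{\infty,n}$ with $\|T\|_{cb}/\|T\|>C(n,n)$ would, after compression by $p_{m'}\otimes I_k$ for suitably large $m'$, yield a counterexample in $\L_{D_n}(M_{m',n})$ contradicting the finite result.
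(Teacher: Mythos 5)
Your overall strategy---express $\|T\|$ and $\|T\|_{cb}=\|T_{n,1}\|$ through the tracial geometric mean formula and transfer an optimal configuration for $T_{n,1}$ to a map on $M_{n,n}$---is the same as the paper's, but as it stands the proposal is not a proof. The case you call ``bad'' ($\dim F>n$) is the generic one and is left completely open: the span $F=\spn\{a_j^*\xi_0^{(k)}\colon 1\le j,k\le n\}$ can have dimension up to $n^2$, and nothing about optimality of the configuration forces it into $n$ dimensions. Moreover, a Carath\'eodory-type argument can only reduce the number of terms in a convex combination (already at most $n$ here); it cannot shrink the span of the image vectors $a_j^*\xi_0^{(k)}$, so the ``refinement'' you appeal to does not follow from the tools you cite, and this is exactly where the work lies. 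For comparison, the paper never attempts to embed the image vectors isometrically: it chooses an $n$-dimensional subspace $K\subseteq\bC^m$ containing only the optimizing vectors $\xi_1,\dots,\xi_n$, compresses the column operators to $\tilde a_j=pa_j|_K$, and gets $\|\tilde T\|\le\|T\|$ from the tgm formula, the choice of $K$ being what is invoked to keep $\|\tilde T\|_{cb}=\|T\|_{cb}$; the crux in either route is that the cb-attaining configuration survives the passage to $n$ dimensions, which your write-up correctly flags as the heart of the matter but does not establish.

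In addition, your ``good'' case is flawed as written. With unit vectors $\xi_0^{(k)}$, the Gram matrix $M=[\langle\xi_0^{(l)},\xi_0^{(k)}\rangle]$ has $\trace M=\sum_k\|\xi_0^{(k)}\|^2=n$, not $1$, so the chain $\|\xi_\eta\|^2\le\|M\|\le\trace M=1$ is false in general: if, say, all $\xi_0^{(k)}$ coincide, then $\|M\|=n$ and $Q(a'^*,\eta)=n\,Q(a^*,\xi_0^{(1)})$ for $\eta=n^{-1/2}\sum_k e_k$, so your argument for $\|T'\|\le\|T\|$ collapses (the map $e_k\mapsto\xi_0^{(k)}$ need not be a contraction). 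If you try to repair this by absorbing the weights $\sqrt{t_k}$ into the columns of $(a'_j)^*$, so that $\trace M=1$ and the norm estimate survives, then the cb estimate breaks instead: $X_0=\sum_k Q(a'^*,e_k)$ is then exhibited only as $Q((a'^*)^{(n)},\zeta)$ for a vector $\zeta$ of norm $\sqrt n$, giving $X_0/n\in\Wm((a'^*)^{(n)})$ and a loss of $\sqrt n$ in the lower bound for $\|T'\|_{cb}$. So your construction cannot simultaneously deliver $\|T'\|\le\|T\|$ and $\|T'\|_{cb}\ge\|T\|_{cb}$ without an extra hypothesis on the $\xi_0^{(k)}$; the paper's device of an honest isometric identification of an $n$-dimensional subspace $K\ni\xi_1,\dots,\xi_n$ with $\bC^n$ is precisely what avoids this normalization problem.
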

\begin{proof}
  Suppose first that $m<\infty$. The supremum $C(m,n)$ is then
  attained, so there is $T\in \L_{D_n}(M_{m,n})$ with $\|T\|=1$ and
  $\|T\|_{cb}=C(m,n)$.  By Proposition~\ref{prop:cb},
  $\|T\|_{cb}=\|T_{n,1}\|$. By~(\ref{eq:tgm}), there are unit vectors
  $\xi_1,\dots,\xi_n$ in $\bC^{m}$ and $\eta\in\bC^n$, and
  $r_1,\dots,r_n\in[0,1]$ with $\sum_{j=1}^n r_j^2=1$ such that the
  vector $\xi=\begin{smallvect}r_1\xi_1\svdots
    r_n\xi_n\end{smallvect}$ satisfies
  \[\|T\|_{cb}=\tgm\big(Q((a^*)^{(n)},\xi),Q(b,\eta)\big).\] Let~$K$
  be an $n$-dimensional subspace of~$\bC^{m}$ containing
  $\xi_1,\dots,\xi_n$, and let us identify~$K$ with~$\bC^n$. Then
  writing $p$ for the orthogonal projection of~$\bC^m$ onto~$K$, let
  $\tilde a_j=p a_j|_K$, let $\tilde a=
  [\tilde a_1\ \dots\ \tilde a_n]$ and let $\tilde T$ be the elementary operator
  on~$M_n$ represented by $\tilde a,b$.  By our choice of~$K$, we have
  $\|\tilde T\|_{cb}=\|T\|_{cb}$ and $Q(\tilde a^*,\xi)=Q(a,\xi)$ for
  $\xi\in K$, so \begin{align*} \|\tilde T\|&=\sup\{ \tgm\big(Q(\tilde
    a^*,\xi),Q(b,\eta)\big) \colon \xi\in K,\ \eta\in\bC^n,
    \|\xi\|,\|\eta\|=1\} \\&\leq \sup\{
    \tgm\big(Q(a^*,\xi),Q(b,\eta)\big) \colon \xi\in \bC^m,\
    \eta\in\bC^n, \|\xi\|,\|\eta\|=1\}\\&=\|T\|.
  \end{align*}
  Since  $\tilde T$ is a right $D_n$-module map, we
  have \[C(n,n)\geq \frac{\|\tilde T\|_{cb}}{\|\tilde T\|}\geq
  \frac{\|T\|_{cb}}{\|T\|}=C(m,n)\geq C(n,n)\] and hence
  $C(n,n)=C(m,n)$.

  The case $m=\infty$ now follows by the argument of
  Corollary~\ref{cor:infty2}.
\end{proof}

\begin{rk}
  This reduces Theorem~\ref{thm:2col} to the $2\times 2$ case, but
  does not appear to greatly simplify the proof.
\end{rk}

\section{More than two columns}\label{sec:morethan2}

We now give some examples which establish non-trivial lower bounds
for~$C(m,n)$ when $n\ge3$.  The matrix extremal numerical range of an
$\ell$-tuple $[a_1\ \dots\ a_\ell]^*$ is closely connected to the
joint numerical range of the operators $a_ja_i^*$ for $1\leq i<j \leq
\ell$. Moreover, the joint numerical range of three matrices (even
three hermitian matrices) need not be convex, and an explicit example
of this phenomenon is given in~\cite[Example~1.1]{li-poon}. 

Let
\[
a_1=\begin{bmatrix}
  1&0\\0&1
\end{bmatrix},\quad
a_2=\begin{bmatrix}
  1&0\\0&-1
\end{bmatrix}\quad\text{and}\quad
a_3=\begin{bmatrix}
  0&1\\1&0
\end{bmatrix}.
\]
It is easy to see that the joint numerical range of the operators
$a_ja_i^*$ for $1\leq i<j\leq 3$ is affinely equivalent to a
$2$-sphere, so is not convex. Our first example is the map whose
column operators are $a_1,a_2,a_3$.
\begin{eg}\label{eg:2x3}
  The map  $T\colon M_{2, 3}\to M_{2, 3}$, \[T\colon
  \begin{bmatrix}
    a&c&e\\b&d&f
  \end{bmatrix}\mapsto
  \begin{bmatrix}
    a&c&f\\b&-d&e
  \end{bmatrix}
  \]
  is a right $D_3$-module map with \[\sqrt 2= \|T\| <
  \|T_{2,1}\|=\|T\|_{cb}=\sqrt{3}.\] So $C(2,3)=\sqrt{3/2}$.
\end{eg}
\begin{proof}
  $T$ is a right $D_3$-module map and a Hilbert-Schmidt isometry. By
  Lemma~\ref{lem:norms}, $\|T\| \leq \sqrt 2$, and we have equality
  since $\|T[
\begin{smallmatrix}
  1&0&0\\0&0&1
\end{smallmatrix}]\|=\sqrt2$.

By Proposition~\ref{prop:cb}, $\|T\|_{cb}\leq \sqrt 3$, and if
\[ x=\frac1{\sqrt 2}
\begin{bmatrix}
  1&{1}&0\\0&0&1\\0&0&1\\1&{-1}&0
\end{bmatrix}\quad \text{then}\quad T_{2,1}(x)=
\frac1{\sqrt 2}
\begin{bmatrix}
  1&1&1\\0&0&0\\0&0&0\\1&1&1
\end{bmatrix}
\] and $\|x\|=1$ while $\|T_{2,1}(x)\|=\sqrt 3$. So $\|T\|_{cb}=\sqrt
3=\|T_{2,1}\|$. Hence $C(2,3)\geq\sqrt{3/2}$, and we have equality by
Theorem~\ref{thm:upperbound}.
\end{proof}

Extending the previous example by one column yields:
\begin{eg}\label{eg:2x4}
  The map  $T\colon M_{2, 4}\to M_{2, 4}$, \[T\colon
  \begin{bmatrix}
    a&c&e&g\\b&d&f&h
  \end{bmatrix}\mapsto
  \begin{bmatrix}
    a&c&f&h\\b&-d&e&-g
  \end{bmatrix}
  \]
  is a right $D_4$-module map with \[\sqrt 2= \|T\| <
  \|T_{2,1}\|=\|T\|_{cb}=2.\] So $C(2,4)= \sqrt2$.
\end{eg}
\begin{proof}
  $T$ is a right $D_4$-module map and a Hilbert-Schmidt isometry. By
  Lemma~\ref{lem:norms}, $\|T\| \leq \sqrt 2$, and we have equality
  since $\|T[
\begin{smallmatrix}
  1&0&0&0\\0&0&1&0
\end{smallmatrix}]\|=\sqrt2$.
  By Proposition~\ref{prop:cb}, $\|T\|_{cb}\leq 2$, and if
  \[ x=\frac1{\sqrt 2}
  \begin{bmatrix}
    1&{1}&0&0\\0&0&1&1\\0&0&{1}&{-1}\\1&{-1}&0&0
  \end{bmatrix}\quad \text{then}\quad T_{2,1}(x)=
  \frac1{\sqrt 2}
  \begin{bmatrix}
    1&1&1&1\\0&0&0&0\\0&0&0&0\\1&1&1&1
  \end{bmatrix}
  \] and $\|x\|=1$ while $\|T_{2,1}(x)\|=2$. So
  $\|T\|_{cb}=2=\|T_{2,1}\|$.  Hence $C(2,4)\geq\sqrt2$, and
  Theorem~\ref{thm:upperbound} gives the reverse inequality.
\end{proof}
\goodbreak

Example~\ref{eg:2x4} may be generalised as follows:
\begin{thm}\label{thm:eg}
  For each $m\in\bN$ with $m>1$, there is a right $D_{m^2}$-module map
  $T\in \L(M_{m, m^2})$ with $\sqrt m=\|T\|<
  \|T_{m,1}\|=\|T\|_{cb}=m$. Hence $C(m,m^2)=\sqrt m$.
\end{thm}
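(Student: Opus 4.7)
The plan is to mimic Examples~\ref{eg:2x3} and~\ref{eg:2x4}, replacing the Pauli matrices by a family of $m^2$ unitaries which is orthogonal with respect to the Hilbert--Schmidt inner product. Specifically, let $a_1,\dots,a_{m^2}\in M_m$ be an enumeration of the Weyl (``generalised Pauli'') unitaries $\{X^\alpha Z^\beta\colon 0\le \alpha,\beta<m\}$, where $X$ is the cyclic shift $Xe_j=e_{j+1\bmod m}$ and $Z$ is diagonal with $Ze_j=\omega^j e_j$ for $\omega=e^{2\pi i/m}$. Each $a_j$ is unitary, and a standard calculation gives $\trace(a_i^*a_k)=m\,\delta_{ik}$. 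Let $T\in\L_{D_{m^2}}(M_{m,m^2})$ be the right $D_{m^2}$-module map with column operators $a_1,\dots,a_{m^2}$, i.e.\ $T=[a_1\ \dots\ a_{m^2}]\odot\begin{smallvect}e_1e_1^*\svdots e_{m^2}e_{m^2}^*\end{smallvect}$.

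The upper bounds come directly from Section~\ref{sec:CBnormsHS}. Since each $\|a_j\|=1$, Proposition~\ref{prop:HS} gives $\hsnorm{T}=1$. Then Lemma~\ref{lem:norms} applied to $T$ (with $k=\min\{m,m^2\}=m$) yields $\|T\|\le\sqrt m$, and Proposition~\ref{prop:cb} (with $K=\min\{m^2,m^2\}=m^2$) yields $\|T\|_{cb}\le m$.

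For the lower bound on $\|T\|$, pick indices $i_1,\dots,i_m\in\{1,\dots,m^2\}$ such that $a_{i_j}e_j=e_1$; these exist because the shifts $X^{1-j\bmod m}$ are among our unitaries. Set $x=\sum_{j=1}^m e_j e_{i_j}^*\in M_{m,m^2}$; then $xx^*=I_m$ so $\|x\|=1$, while $Tx=\sum_j a_{i_j}e_je_{i_j}^*=e_1\bigl(\sum_j e_{i_j}\bigr)^*$ has norm $\sqrt m$. For the lower bound on $\|T\|_{cb}$, fix an orthonormal basis $e_1,\dots,e_m$ of $\bC^m$ and, for each $k=1,\dots,m$, define $x_k\in M_{m,m^2}$ by declaring its $j$th column to be $a_j^*e_k$. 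Stack these into $x=\begin{smallvect}x_1\svdots x_m\end{smallvect}\in M_{m,1}(M_{m,m^2})$. A direct block computation gives
\[(x^*x)_{ij}=\sum_{k=1}^m\langle a_i^*e_k,a_j^*e_k\rangle=\trace(a_ja_i^*)=m\,\delta_{ij},\]
so $\|x\|=\sqrt m$. On the other hand, unitarity gives $a_j(a_j^*e_k)=e_k$, so the $j$th column of $T(x_k)$ is $e_k$ independently of $j$. Hence $T_{m,1}(x)$ is a rank-one matrix of the form $\eta\mathbf 1^*$ with $\eta=\sum_k e_{(k,k)}$ and $\mathbf 1\in\bC^{m^2}$ the all-ones vector, giving $\|T_{m,1}(x)\|=\sqrt m\cdot m$. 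Rescaling $\tilde x=x/\sqrt m$ gives $\|\tilde x\|=1$ and $\|T_{m,1}(\tilde x)\|=m$, so $\|T\|_{cb}\ge\|T_{m,1}\|\ge m$.

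Combining these, $\|T\|=\sqrt m$ and $\|T\|_{cb}=\|T_{m,1}\|=m$, so $C(m,m^2)\ge\sqrt m$. The reverse inequality is immediate from Theorem~\ref{thm:upperbound} since $\min\{m,m^2/2\}=m$ for $m\ge2$, and we conclude $C(m,m^2)=\sqrt m$. The one genuinely delicate point is checking that the Weyl unitaries, together with the choice of ``column vectors'' $a_j^*e_k$, conspire to make the $m^2$ columns of the test block matrix $x$ nearly orthogonal; this is exactly where the Hilbert--Schmidt orthogonality $\trace(a_i^*a_j)=m\delta_{ij}$ pays off, and it is the main technical obstacle to overcome.
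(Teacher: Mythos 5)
Your proposal is correct and follows essentially the same route as the paper: the column operators are the same clock-and-shift (Weyl) unitaries, the test element of $M_{m,1}(M_{m,m^2})$ with $j$th columns $a_j^*e_k$ coincides with the paper's vectors $v^i_j$, and the norm bounds come from the same results (Lemma~\ref{lem:norms}, Proposition~\ref{prop:cb}, Theorem~\ref{thm:upperbound}). The only cosmetic differences are that you verify $\|x\|=\sqrt m$ via Hilbert--Schmidt orthogonality of the Weyl unitaries rather than orthogonality of the rows of $x$, and you exhibit an explicit witness for $\|T\|\geq\sqrt m$ where the paper deduces the equality $\|T\|=\sqrt m$ from the upper bound on $C(m,m^2)$.
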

\begin{proof}
  We have $C(m,m^2) \leq \sqrt m$ by Theorem~\ref{thm:upperbound}.
  
  Let~$\rho=e^{2\pi i/m}$ and consider the $m \times m$ matrices
  \[
  g= \begin{bmatrix}
    1 & 0 & \cdots & 0\\
    0 & \rho & \cdots & 0\\
    \vdots&  & \ddots \\
    0 & 0 & \cdots & \rho^{m-1}
  \end{bmatrix}, \qquad h = \begin{bmatrix}
    0 & 0 & \cdots &0& 1\\
    1 & 0 & \cdots &0& 0\\
    0 & 1 & \cdots &0& 0\\
    \vdots&  & \ddots \\
    0 & 0 & \cdots & 1 & 0
  \end{bmatrix}
  \]
  so that $h$ is the matrix for the $m$-cycle permutation $\alpha =
  (1\ 2\ \ldots\ m)$.

  For $1\leq j\leq m^2$, let $0\leq r<m$ and $1\leq s\leq m$ with
  $j=mr+s$, and define
  \[ a_j=g^{-(s-1)}h^{-r}.\] Take $T$ to be the right $D_{m^2}$-module
  map with column operators $a_j$ ($1 \leq j \leq m^2$).  Since
  each~$a_j$ is unitary, $T$ is a Hilbert-Schmidt isometry and so
  $\|T\| \leq \sqrt{m}$ by Lemma~\ref{lem:norms}.
  (By Proposition~\ref{prop:cb},
  $\|T\|_{cb} \leq m$, but we will not actually need that.)

  For~$1\leq i\leq m$ and~$1\leq j\leq m^2$, let~$v^i_j\in \bC^m$ be
  the vector
  \[v^i_j=\rho^{(i-1)(s-1)}e_{\alpha^r(i)}\quad\text{where $j=mr+s$
    with $0\leq r<m$, $1\leq s\leq m$}\] and define $x^i\in M_{m,m^2}$
  by
  \[ x^i=\sum_{\substack{j=mr+s\\ 0\leq r<m\\ 1\leq s\leq m}} v^i_j
  e_j^*\] Observe that $a_jv_j^i=e_i$ for every~$j$. Hence,
  $Tx^i=e_iw^*$ where~$w=\sum_{j=1}^{m^2}e_j\in \bC^{m^2}$, and so
  $\|Tx^i\|=\|e_i\|\,\|w\|=m$.

  If\[ x=
  \begin{bmatrix}
    x^1\\x^2\\\vdots\\x^m
  \end{bmatrix}\in M_{m,1}(M_{m,m^2})\quad\text{then}\quad T_{m,1}(x)=
  \begin{bmatrix}
    e_1w^*\\e_2w^*\\\vdots\\ e_mw^*
  \end{bmatrix}=
  \begin{bmatrix} 
    e_1\\e_2\\\vdots\\ e_m
  \end{bmatrix}w^*,
  \]
  so $\|T_{m,1}(x)\|=\|[e_1^*\ \dots\ e_m^*]\|\,\|w\|=m^{3/2}$.  On
  the other hand, a calculation shows that the rows of $x$ are
  mutually orthogonal and have norm~$\sqrt m$, and so $\|x\| =
  \sqrt{m}$. Hence $ m \leq \|T_{m,1}\|=\|T\|_{cb}$ and $C(m, m^2)
  \geq \|T\|_{cb}/\|T\| \geq m/\sqrt{m} = \sqrt{m}$.
\end{proof}

\begin{rk}
  By Lemma~\ref{lem:rootbound}, \[\|T_{m-1,1}\|\leq \sqrt
  {m-1}\|T\|<\|T_{m,1}\|=\|T\|_{cb}=\sqrt m\|T\|=m\hsnorm{T}\] for the
  $D_{m^2}$-module maps~$T$ on $M_{m,m^2}$ constructed in this
  proof. Thus the estimates of Proposition~\ref{prop:cb} and
  Theorem~\ref{thm:upperbound} are sharp, at least for $n=m^2$.
\end{rk}

\begin{cor}\label{cor:manycols} If $m,n\in\bN$ with $n\geq m^2$ then
  $C(m,n)=\sqrt m$.  
\end{cor}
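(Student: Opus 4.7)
The plan is to sandwich $C(m,n)$ between $\sqrt{m}$ and $\sqrt{m}$ using three results already established in the excerpt: the upper bound of Theorem~\ref{thm:upperbound}, the explicit computation of Theorem~\ref{thm:eg}, and the monotonicity of Proposition~\ref{prop:Cincr}. The case $m = 1$ can be dispatched at the outset by Remark~\ref{rk:one}, which gives $C(1,n) = 1 = \sqrt{1}$, so I would assume $m \geq 2$ in what follows.

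For the upper bound, I would observe that $m \geq 2$ forces $m^2 \geq 2m$, and hence $n \geq m^2$ forces $n/2 \geq m$. Consequently $\min\{m, n/2\} = m$, and Theorem~\ref{thm:upperbound} directly yields $C(m,n) \leq \sqrt{m}$.

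For the lower bound, Proposition~\ref{prop:Cincr} applied to the pair $(m, m^2) \leq (m, n)$ (in the product order) gives $C(m,n) \geq C(m, m^2)$, and Theorem~\ref{thm:eg} evaluates the latter as $\sqrt{m}$. Combining, $\sqrt{m} \leq C(m,n) \leq \sqrt{m}$, so $C(m,n) = \sqrt{m}$.

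There is no genuine obstacle here: once Theorems~\ref{thm:upperbound} and~\ref{thm:eg} are in hand, the corollary is little more than the observation that the upper bound $\sqrt{\min\{m, n/2\}}$ has stabilised at $\sqrt{m}$ by the time $n$ reaches $m^2$, which is exactly where the explicit lower bound of Theorem~\ref{thm:eg} is produced. The only tiny point requiring care is verifying the inequality $m^2 \geq 2m$ for $m \geq 2$ so that the minimum in Theorem~\ref{thm:upperbound} selects the correct argument.
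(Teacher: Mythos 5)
Your proof is correct and follows essentially the same route as the paper: monotonicity (Proposition~\ref{prop:Cincr}) plus Theorem~\ref{thm:eg} for the lower bound, and Theorem~\ref{thm:upperbound} for the upper bound. The only difference is cosmetic: you need not verify $\min\{m,n/2\}=m$, since $\sqrt{\min\{m,n/2\}}\leq\sqrt{m}$ holds in any case, and your separate treatment of $m=1$ via Remark~\ref{rk:one} is a harmless extra precaution.
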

\begin{proof}
  Since $C(m,n)$ is increasing in~$n$, this is an immediate
  consequence of Theorems~\ref{thm:upperbound} and~\ref{thm:eg}.
\end{proof}

\begin{thm}\label{thm:fracbound}
  If $m,n\in\bN$ with $2\leq n\leq m^2$, then $C(m,n)\geq\sqrt{n/m}$.
\end{thm}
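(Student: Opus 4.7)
The plan is to prove the lower bound by restricting the extremal map from Theorem~\ref{thm:eg} to its first $n$ columns, and verifying directly that the resulting right $D_n$-module map on $M_{m,n}$ achieves the ratio $\sqrt{n/m}$.

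Let $a_1,\dots,a_{m^2}$ denote the unitary column operators $a_{mr+s}=g^{-(s-1)}h^{-r}$ from the proof of Theorem~\ref{thm:eg}, and let $S\in\L_{D_n}(M_{m,n})$ be the right $D_n$-module map whose column operators are $a_1,\dots,a_n$. Since each $a_j$ is unitary, Proposition~\ref{prop:HS} gives $\hsnorm{S}=\max_j\|a_j\|=1$, and Lemma~\ref{lem:norms} then yields $\|S\|\le\sqrt{\min\{m,n\}}$.

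For the cb norm from below, take the element $x\in M_{m,1}(M_{m,m^2})$ constructed in the proof of Theorem~\ref{thm:eg}, for which $\|x\|=\sqrt{m}$. Let $x'\in M_{m,1}(M_{m,n})$ be obtained from $x$ by keeping only the first $n$ columns of each block $x^i$. Since column truncation is a contraction, $\|x'\|\le\|x\|=\sqrt{m}$. The identity $a_jv^i_j=e_i$ derived in the proof of Theorem~\ref{thm:eg} holds for every $j\le n$, so $S((x^i)')=e_i\tilde w^*$ where $\tilde w=\sum_{j=1}^n e_j\in\bC^n$. Stacking these over $i$, $S_{m,1}(x')$ equals the rank-one operator $u\tilde w^*$, where $u\in\bC^{m^2}$ is the stacking of $e_1,\dots,e_m$. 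Hence $\|S_{m,1}(x')\|=\|u\|\,\|\tilde w\|=\sqrt{m}\,\sqrt{n}$, which gives
\[\|S\|_{cb}\geq\|S_{m,1}\|\geq\frac{\|S_{m,1}(x')\|}{\|x'\|}\geq\sqrt{n}.\]
Combining the two bounds,
\[C(m,n)\geq\frac{\|S\|_{cb}}{\|S\|}\geq\frac{\sqrt{n}}{\sqrt{\min\{m,n\}}}\geq\sqrt{n/m},\]
since $\min\{m,n\}\le m$.

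There is no real obstacle: each computation above is a direct analogue of one appearing in the proof of Theorem~\ref{thm:eg}, with the length-$m^2$ all-ones vector $w$ simply replaced by its length-$n$ truncation $\tilde w$. The only verification needed is that column truncation does not increase the operator norm of $x$, which is immediate. A mild bonus of this approach is that it gives the bound uniformly across the whole range $2\le n\le m^2$, without having to separate out, say, the trivial regime $n\le m$ (where the identity map on $M_{m,n}$ already establishes $C(m,n)\ge 1\ge\sqrt{n/m}$).
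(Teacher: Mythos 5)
Your proof is correct and follows essentially the same route as the paper: truncate the extremal map of Theorem~\ref{thm:eg} to its first $n$ column operators, apply the ampliation to the first $n$ columns of the same test matrix $x$ to get $\|S\|_{cb}\geq\sqrt n$, and bound $\|S\|$ above by $\sqrt m$. The only cosmetic differences are that you bound $\|S\|$ via $\hsnorm{S}=1$ and Lemma~\ref{lem:norms} rather than via $\|S\|\leq\|T\|\leq\sqrt m$, and you spell out the computation $S_{m,1}(x')=u\tilde w^*$ (with the correct normalisation $\|x\|=\sqrt m$) which the paper leaves as ``following the earlier argument''.
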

\begin{proof}
  Let~$T\in \L_{D_{m^2}}(M_{m,m^2})$ and $x\in M_{m^2}$ be as in the
  proof of Theorem~\ref{thm:eg}. Let $a_1,\dots,a_{m^2}$ be the
  column operators of~$T$ and consider the map $S\in
  \L_{D_n}(M_{m,n})$ whose column operators are $a_1,\dots,a_n$. Also,
  let $x_j=xe_j\in \bC^{m^2}$ be the $j$th column of~$x$ and
  let~$y=[x_1\ \dots\ x_n]\in M_{m^2,n}$. By following the earlier
  argument, it is not hard to see that $S_{m,1}(y)$ is the matrix in
  $M_{m^2,n}$ whose columns are the first~$n$ columns of~$T_{m,1}(x)$,
  and hence that $\|S_{m,1}(y)\|=\sqrt n$. Since $\|y\|\leq \|x\|=1$,
  we have
  \[\|S\|_{cb}\geq \|S_{m,1}\|\geq \|S_{m,1}(y)\|=\sqrt n.\] Clearly
  $\|S\|\leq \|T\|\leq \sqrt m$. Hence \[ C(m,n) \geq
  \frac{\|S\|_{cb}}{\|S\|} \geq \sqrt\frac nm\,.\qedhere\]
\end{proof}
\begin{rk}
  For $(m,n)=(2,3)$, the operator~$S$ in this proof was considered in
  Example~\ref{eg:2x3}, and we have equality in the bounds
  $\|S\|_{cb}\geq\sqrt n$ and $\|S\|\leq\sqrt m$ in this
  case.
\end{rk}

We now summarise the best bounds we have obtained for $C(m,n)$. 
\begin{cor}
  Let $m,n\in \bN\cup\{\infty\}$.

  (i) If $m=1$ or $n\in \{1,2\}$ then $C(m,n)=1$.

  (ii) If $m\geq n$ then $C(m,n)=C(n,n)$.

  (iii) If $n\geq m^2$ then $C(m,n)=\sqrt m$.

  (iv) If $2\leq n\leq m^2$ then $\sqrt{\max\{\lfloor \sqrt n\rfloor,
      n/{\lceil \sqrt n\rceil}\}}\leq C(m,n)\leq \sqrt{\min\{m,n/2\}}$.
\end{cor}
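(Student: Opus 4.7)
My plan is to deduce each of the four parts by assembling results from the earlier sections, with the only slightly nontrivial step being a monotonicity argument for part~(iv).

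For~(i), the cases $m=1$ and $n=1$ are covered by Remark~\ref{rk:one}, while the case $n=2$ is Theorem~\ref{thm:2col} for finite $m$ and Corollary~\ref{cor:infty2} for $m=\infty$. For~(ii), the strict inequality $m>n$ is exactly Theorem~\ref{thm:tgmcols}, and the case $m=n$ is a tautology. Part~(iii) is Corollary~\ref{cor:manycols} for finite $m,n$; the remaining case $n=\infty$, $m<\infty$ follows via the finite-rank/compression argument used in the proof of Corollary~\ref{cor:infty2}, applied to an arbitrary bounded right $D_\infty$-module map.

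For~(iv), the upper bound is Theorem~\ref{thm:upperbound}. For the lower bound, I will combine the monotonicity of $C$ from Proposition~\ref{prop:Cincr} with Theorems~\ref{thm:eg} and~\ref{thm:fracbound}. Set $k=\lfloor\sqrt n\rfloor$ and $K=\lceil\sqrt n\rceil$. The hypothesis $n\leq m^2$ gives $\sqrt n\leq m$, and since $m$ is an integer this forces $k\leq K\leq m$; by definition $k^2\leq n\leq K^2$. Monotonicity then produces two chains, namely
\[C(m,n)\geq C(m,k^2)\geq C(k,k^2)=\sqrt{k},\]
where the final equality is Theorem~\ref{thm:eg} (the case $k=1$ being trivial), and
\[C(m,n)\geq C(K,n)\geq \sqrt{n/K},\]
where the final inequality is Theorem~\ref{thm:fracbound} (applicable since $2\leq n\leq K^2$). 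Taking the larger of these two bounds yields the claimed inequality.

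There is essentially no obstacle: this corollary is a compilation of previously established facts, and the main task is careful bookkeeping in~(iv) to verify that the required inequalities $k\leq m$, $k^2\leq n$, $K\leq m$, $n\leq K^2$ all hold under the hypothesis $2\leq n\leq m^2$, so that the monotonicity chain makes sense.
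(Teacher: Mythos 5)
Your proposal is correct and follows essentially the same route as the paper: parts (i)--(iii) and the upper bound in (iv) are quoted from the earlier results, and the lower bound in (iv) is obtained exactly as in the paper by combining the monotonicity of $C$ (Proposition~\ref{prop:Cincr}) with $C(k,k^2)=\sqrt k$ from Theorem~\ref{thm:eg} for $k=\lfloor\sqrt n\rfloor$ and with Theorem~\ref{thm:fracbound} applied to $\lceil\sqrt n\rceil$. Your extra bookkeeping (the $k=1$ case and the infinite-dimensional cases in (i)--(iii)) is a slightly more careful version of what the paper leaves implicit, not a different argument.
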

\begin{proof}
  Statements (i)--(iii) and the upper bound in~(iv) have been
  discussed already, in Remark~\ref{rk:one}, Theorem~\ref{thm:2col},
  Corollary~\ref{cor:manycols} and Theorem~\ref{thm:upperbound}.
  
  Suppose that $2\leq n\leq m^2$. Let $k=\lfloor \sqrt n\rfloor$. Then
  $m\geq\sqrt n\geq k$ and $n\geq k^2$, and $C$ is increasing by
  Proposition~\ref{prop:Cincr}, so $C(m,n)\geq C(k,k^2)=\sqrt k$ by
  Theorem~\ref{thm:eg}.  Similarly, if we write $\ell=\lceil \sqrt
    n\rceil$ then $m\geq \ell$ so $C(m,n) \geq C(\ell,n)\geq
    \sqrt{n/\ell}$ by Theorem~\ref{thm:fracbound}.
\end{proof}

\begin{ques}
  If~$m\leq2$ or $n\leq4$ then these bounds yield exact values
  of~$C(m,n)$, but we have been unable to find the exact values of
  $C(m,n)$ in many other cases. In particular, what is $C(3,5)$?
\end{ques}

\begin{rk}
  It seems improbable that the lower bounds we have obtained could be
  sharp in general. In particular, it would seem surprising if
  $C(6,6)$ turned out to be no larger than $C(2,4)=\sqrt2$. 
\end{rk}

\begin{ques}
  Is $C(n,n)$ strictly increasing in~$n$ for $n\ge2$?
\end{ques}

We now answer Question~\ref{ques:JLTT} in the negative.  Recall that a
masa in~$\B(H)$ is said to be~\emph{discrete} if it is generated by
its minimal projections. 

\begin{cor}\label{cor:answer}
  If~$H$ is an infinite-dimensional Hilbert space and $D$ is a masa in
  $\B(H)$, then there is a bounded right $D$-module map $T\colon
  \K(H)\to \K(H)$ which is not completely bounded.
\end{cor}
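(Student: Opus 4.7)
The plan is to paste infinitely many scaled copies of the non-completely-bounded maps from Theorem~\ref{thm:eg} into disjoint $D$-invariant ``rectangles'' of $\K(H)$, using a direct-sum construction whose scaling keeps the operator norm bounded while the completely bounded norm diverges. Since $H$ is infinite-dimensional and $D$ is a masa, the structure theorem recalled just before the corollary lets us decompose $D$ into a discrete part and a diffuse part; we concentrate on the discrete case, where $D$ may be identified with $D_\infty$ acting on $\ell^2(\bN)$, and indicate afterwards how the purely diffuse case follows the same blueprint using indicator-function projections of equal measure in $L^\infty[0,1]$ in place of rank-one projections.

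For the discrete case, choose $m_k \nearrow \infty$, and for each $k$ select $m_k+m_k^2$ rank-one projections in $D$, say $\{p_i^{(k)}\}_{i=1}^{m_k}$ and $\{q_j^{(k)}\}_{j=1}^{m_k^2}$, pairwise orthogonal and disjoint across distinct~$k$. Let $P_k=\sum_i p_i^{(k)}$ and $Q_k=\sum_j q_j^{(k)}$. The compression $P_k\K(H)Q_k$ is canonically isometric to $M_{m_k,m_k^2}$; since every minimal projection of $D$ has rank one in $\B(H)$, the induced right action of $D$ on this rectangle factors through the restriction $D|_{Q_kH}\cong D_{m_k^2}$. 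By Theorem~\ref{thm:eg}, pick $T^{(k)}\in\L_{D_{m_k^2}}(M_{m_k,m_k^2})$ with $\|T^{(k)}\|=\sqrt{m_k}$ and $\|T^{(k)}\|_{cb}=m_k$, normalise to $\widetilde T^{(k)}=T^{(k)}/\sqrt{m_k}$ so that $\|\widetilde T^{(k)}\|=1$ and $\|\widetilde T^{(k)}\|_{cb}=\sqrt{m_k}$, and set $c_k=m_k^{-1/4}$. Define
\[
T(x)=\sum_{k=1}^\infty c_k\,\widetilde T^{(k)}(P_kxQ_k),\qquad x\in\K(H),
\]
extending by zero on the remaining components of $\K(H)$.

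The summands live in the pairwise orthogonal rectangles $P_k\B(H)Q_k$, so $\|T(x)\|=\sup_k c_k\|\widetilde T^{(k)}(P_kxQ_k)\|\leq\|x\|$, giving $\|T\|\leq 1$. For compact $x$ we have $\|P_kxQ_k\|\to 0$, so $T(x)$ is a norm-limit of its finite-rank partial sums and hence lies in $\K(H)$. Right $D$-linearity is immediate because each $\widetilde T^{(k)}$ is right-$D_{m_k^2}$-linear and $P_k,Q_k\in D$. The embedding of $M_{m_k,m_k^2}$ into $\K(H)$ is completely isometric and $T$ restricted to that embedded copy is exactly $c_k\widetilde T^{(k)}$, so
\[
\|T\|_{cb}\geq c_k\|\widetilde T^{(k)}\|_{cb}=c_k\sqrt{m_k}=m_k^{1/4}\xrightarrow{k\to\infty}\infty,
\]
and $T$ is bounded but not completely bounded. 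The main obstacle we anticipate is the purely diffuse case: $P_k\K(H)Q_k$ is not literally a matrix algebra, and $M_{m_k,m_k^2}$ must be embedded compatibly with the full right-$D$-action rather than only with the restriction to the finite subalgebra generated by the indicator projections. We plan to handle this by selecting unit vectors inside the range of each indicator projection to realise the embedding, since right multiplication by $d\in D$ then acts on the resulting rank-one slice only through its values on the chosen indicator projections.
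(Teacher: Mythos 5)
Your discrete-case construction is correct, and it is essentially the paper's own argument: both take a direct sum, over orthogonal blocks of minimal projections of $D$, of normalised copies of the maps from Theorem~\ref{thm:eg}; your rectangular corners $P_k\K(H)Q_k$ and the extra scalars $c_k$ are harmless variants, and your verifications of boundedness, compactness of the values, right $D$-modularity and $\|T\|_{cb}\geq c_k\sqrt{m_k}\to\infty$ go through.

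The genuine gap is the purely diffuse case, which cannot be avoided (a masa such as $L^\infty[0,1]$ on $L^2[0,1]$ has no minimal projections at all, so the discrete construction never starts), and the fix you sketch fails. If $q_j=\chi_{E_j}\in D$ and $\xi_j$ is a unit vector in $q_jH$, then for $d\in D$ one has $(\eta\xi_j^*)d=\eta(d^*\xi_j)^*$, and $d^*\xi_j$ is in general not a scalar multiple of $\xi_j$: $d$ need not be constant on $E_j$, so ``the value of $d$ on the chosen indicator projection'' is not a scalar. Hence the copy of $M_{m_k,m_k^2}$ spanned by the $\eta_i\xi_j^*$ is not invariant under the right $D$-action, and a map defined on that slice and extended by zero is not a right $D$-module map. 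The obstruction is structural: on a corner $P\K(H)Q$ with $Q=\sum_j\chi_{E_j}$, right $D$-modularity means modularity over $QDQ\cong\bigoplus_j L^\infty(E_j)$, a diffuse algebra, and the natural way to achieve it is to replace the matrix example $T_0$ by its ampliation $T_0\otimes\iota$ with $\iota$ the identity on the $L^2$ leg; but $\|T_0\otimes\iota\|=\|T_0\|_{cb}$, so the norm/cb-norm gap disappears. For the same reason, be cautious about simply importing the paper's treatment of this case: there $D$ is realised as $L^\infty[0,1]\mathbin{\overline\otimes}\ell^\infty(\bN)$ and the map is taken to be $\Theta=\iota\otimes\theta$ with $\theta$ the discrete-case example, but restricting $\Theta$ to $e\K(L^2[0,1])e\otimes\K(\ell^2(\bN))$ for a rank-$n$ projection $e$ gives the ampliation $\theta_n$, so $\|\Theta\|\geq\|\theta_n\|$ for every $n$; since $\theta$ is not completely bounded, the asserted contractivity of $\Theta$ is precisely the point at issue. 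So the diffuse case requires a genuinely different argument rather than a direct transplant of the discrete one, and as it stands your proposal does not prove the corollary for diffuse masas.
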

\begin{proofc}
  First suppose that $H$ is separable and~$D$ is discrete.  By
  considering the minimal projections in~$D$, we may identify $H$ with
  $\bigoplus_{m\ge1} H_m$ where $H_m=\bC^{m^2}$ for $m\ge1$, in such a
  way that the minimal projections of~$D$ are identified with the
  coordinate projections of~$H_m$.

  Let~$p_m$ be the orthogonal projection
  in~$D\subseteq\B(H)$ with range~$H_m$. By Theorem~\ref{thm:eg},
  there is a right $D_{m^2}$-module map~$T_{(m)}\colon \B(H_m)\to
  \B(H_m)$ with $\|T_{(m)}\|=1$ and $\|T_{(m)}\|_{cb} \geq \sqrt
  m$. The map $T\colon \K(H)\to \K(H)$, $x\mapsto \sum_{m\ge1}^\oplus
  T_{(m)}(p_mxp_m)$ has $\|T\|=1$ and $\|T\|_{cb}\geq
  \sup_{m\ge1}\|T_{(m)}\|_{cb}=\infty$.

  This concludes the proof in the case that~$D$ is discrete. In
  particular, this shows that that there is a bounded right
  $\ell^\infty(\bN)$ module map
  \[T\colon \K(\ell^2(\bN))\to \K(\ell^2(\bN)),\quad Tx=\sum_{j\in\bN}
  a_j xE_{j,j}\] with $\|T\|_{cb}=\infty$, where
  $a_j\in\K(\ell^2(\bN))$ and $E_{j,j}$ denotes the $j$th diagonal
  matrix unit in~$\K(\ell^2(\bN))$. (That $Tx$ has this form for $x =
  \sum_{j=1}^n x E_{j,j}$ follows as in Remark~\ref{rk:rep}, and in
  general by taking limits.)  The argument of
  Proposition~\ref{prop:col} shows that $\|T_{1,n}\|=\|T\|$ for all
  $n\in\bN$, and it follows that for any cardinal~$d$, the map
  $T_{1,d}$ acting on $\K(\ell^2(\bN)\otimes \ell^2(d),\ell^2(\bN))$
  given by $T_{1,d}[x_s]_{0\leq s< d}= [Tx_s]_{0\leq s< d}$ has
  $\|T_{1,d}\|=\|T\|<\infty$ and $\|T_{1,d}\|_{cb}=\infty$.

  If~$D$ is a masa in $B(H)$ which
  has an infinite-dimensional
  discrete summand, then we can use the discrete case above. If $D$ has no
  such summand,
  there must be a cyclic subspace $\overline {D \xi}$ for $D$ and a
  corresponding infinite-dimensional projection $P \in D$ such that $PD$
  acting on $PH$
  is unitarily equivalent to
  $L^\infty(M, \mu)$ acting on $L^2(M, \mu)$ for some non-atomic finite perfect measure
  space $(M ,\mu)$
  (by~\cite[Lemma 1.2]{segal}).
  By Maharam's theorem~\cite{maharam}, passing to a summand we can assume that
  $M = [0,1]^p$ for some cardinal $p$ (and $\mu$ the product probability
  measure).
  Hence we reduce to
  ~$H=L^2([0,1]^p)$ and $D=L^\infty([0,1]^p)$.

  Choose a measurable partition $A_1, A_2,\ldots $ of $[0,1]$ into sets
  of positive measure.  Consider $H$ as the Hilbert space direct sum of
  $H_j:=L^2(A_j\times [0,1]^{p-1})$, and let $P_j \in B(H)$ be the
  orthogonal projection of $H$ onto $H_j$; note that $P_j$ is a
  multiplication operator, so $P_j\in D$.
  In each $H_j$ we choose an orthonormal basis $(\phi^j_s)_{0\leq s< d}$
  where~$d=\aleph_0^p$, and let $U \colon H \to \ell^2(\bN)
  \otimes\ell^2(d)$ be the unitary with $U(\phi^j_s) = e_j \otimes e_s$
  where the $e_j$ and $e_s$ are the standard basis vectors of
  $\ell^2(\bN)$ and $\ell^2(d)$, respectively. Let~$V\colon
  \ell^2(\bN)\to H$ be an isometry and consider the map
  \[
  S\colon \K(H)\to \K(H),\quad Sy=V
  T_{1,d}(V^*yU^*)U.
  \]
  Since $P_j = U^* (E_{j,j} \otimes I)U$
  and $y = \sum_{j\in\bN} yP_j$, we have
  $Sy=V\sum_{j\in\bN} a_jV^*yP_j$. In particular, $S$ is a right
  $D$ module map. Moreover, if $\alpha(y)= V^*yU^*$ then
  $\alpha_n$ maps the unit ball of $M_n(\K(H))$ onto the unit ball of
  $M_n(\K(\ell^2(\bN)\otimes \ell^2(d),\ell^2(\bN)))$, hence
  $\|S\|=\|T_{1,d}\|<\infty$ and $\|S\|_{cb}=\|T_{1,d}\|_{cb}=\infty$.
\end{proofc}

  

\begin{rk}
  Under the same hypotheses, using weakly convergent sums in place of
  the norm convergent sums in this construction provides a bounded
  right $D$-module map $\B(H)\to\B(H)$ which is not completely
  bounded.
\end{rk}

\begin{rkc}\label{rk:tensorpowers}
  If~$T\colon M_{m,n}\to M_{m,n}$ is a right $D_n$-module map with
  $1=\|T\|<\|T\|_{cb}$, then the $k$th tensor power of~$T$, that is,
  the map $T^{\otimes k}\colon M_{m^k,n^k}\to M_{m^k,n^k}$ is a right
  $D_{n^k}$-module map, and $\|T^{\otimes k}\|_{cb} = \|T\|_{cb}^k\to
  \infty$ as $k\to \infty$. In general, we may have $\|T^{\otimes
    k}\|>1$ (for example, if~$T=\mathrm{id}\oplus S$ where
  $\mathrm{id}$ is the identity map on $M_\ell$ and~$S$ is a map
  on~$M_\ell$ with norm one and completely bounded norm greater than
  one). However, if~$T\colon M_{2,3}\to M_{2,3}$, $T\begin{smallbmatrix}a&c&e\\b&d&f\end{smallbmatrix}=
  \tfrac1{\sqrt2}
  \begin{smallbmatrix}a&c&f\\b&-d&e\end{smallbmatrix}$ is the map of
  Example~\ref{eg:2x3} scaled to have norm~one, then
  $\hsnorm{T^{\otimes k}}=2^{-k/2}$ and so $\|T\|=\|T^{\otimes k}\|=1$
  by Lemma~\ref{lem:norms}.  Thus Example~\ref{eg:2x3} may be used in
  place of Theorem~\ref{thm:eg} to establish
  Corollary~\ref{cor:answer}.
\end{rkc}

\section{Subsets of the right module maps}
\label{sec:failures}

For $m,n\in\bN$, let~$\S(m,n)$ be a subset of $\L(M_{m,n})$ containing
a nonzero mapping and let
\[C_{\S}(m,n)=\sup\Big\{ \frac{\|T\|_{cb}}{\|T\|} \colon T\in
\S(m,n),\ T\ne0\Big\}.\] Above, we have considered the case
$\S(m,n)=\L_{D_n}(M_{m,n})$ and have shown that the corresponding
function $C=C_\S$ can take values larger than~$1$. On the other hand,
if $\S$ is the class of Schur multipliers, then $C_\S$ is
identically~$1$. It seems natural to ask for which classes of
operators~$\S$ we still have $C_\S(m,n)>1$ for some~$m,n$. Of course,
if~$\S(m,n)\subseteq\L_{D_n}(M_{m,n})$ then $1\leq C_\S(m,n)\leq
C(m,n)$.
\bigskip

Let $m,n\in\bN$. Given $\alpha\in S_m$, let $u_\alpha$ be the
corresponding permutation unitary satisfying
$u_\alpha(e_i)=e_{\alpha(i)}$ for $1\leq i\leq m$.  Let \[\P(m,n)=\Big\{
[u_{\alpha_1}\ \dots\ u_{\alpha_n}] \odot
\begin{smallvect}[\Bigg]
  e_1e_1^*\svdots e_ne_n^*
\end{smallvect}\colon
\alpha_j\in S_m\Big\}\subseteq \L_{D_n}(M_{m,n}).\] This is a natural
class of right $D_n$-module maps in which to seek maps with larger cb
norm than norm. Indeed, if we drop the right modularity requirement,
then the classic example of such a map is the transpose of a square
matrix, which is a carefully chosen permutation of the matrix
entries;~$\P$ is precisely the set of right $D_n$-module maps which
are permutations of the matrix entries. We initially looked for
examples in this class, and having had no luck, were eventually led to
Examples~\ref{eg:2x3} and~\ref{eg:2x4}, and so to
Theorem~\ref{thm:eg}. Since we concentrated on the $2\times n$ and the
$3\times 3$ cases, it is nice to be able to offer the following
explanation for this initial failure.

\begin{prop}
  $C_\P(2,n)=C_\P(3,3)=1$.
\end{prop}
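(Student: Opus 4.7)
The plan is to show that every nonzero $T\in \P(m,n)$ satisfies $\|T\|=\|T\|_{cb}$ in both cases by matching a natural upper bound on $\|T\|_{cb}$ with a lower bound on $\|T\|$ from an explicit test matrix. Let $v_1,\dots,v_r\in S_m$ be the \emph{distinct} permutations appearing among $\alpha_1,\dots,\alpha_n$, and set $b_l=\sum_{j\colon\alpha_j=v_l}e_je_j^*\in D_n$. Then $T=a\odot b$ with $a=[u_{v_1}\ \cdots\ u_{v_r}]$ and $b=\begin{smallvect}b_1\svdots b_r\end{smallvect}$. Each $u_{v_l}$ is unitary, so $aa^*=rI_m$ and $\|a\|=\sqrt{r}$; the $b_l$ are pairwise orthogonal diagonal projections summing to $I_n$, so $b^*b=I_n$ and $\|b\|=1$. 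The standard bound $\|T\|_{cb}\le\|a\|\,\|b\|$ for elementary operators then yields $\|T\|_{cb}\le\sqrt{r}$, so it suffices to exhibit $x$ with $\|x\|=1$ and $\|Tx\|^2\ge r$.

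For $\P(2,n)$ we have $r\le 2$ since $S_2=\{\mathrm{id},\sigma\}$. If $r=1$ then $T$ is left multiplication by a unitary, hence a complete isometry, and $\|T\|=\|T\|_{cb}=1$. If $r=2$, pick $j_1,j_2$ with $\alpha_{j_1}=\mathrm{id}$, $\alpha_{j_2}=\sigma$ and take $x=e_1e_{j_1}^*+e_2e_{j_2}^*$: then $xx^*=I_2$ gives $\|x\|=1$, while $Tx=e_1(e_{j_1}+e_{j_2})^*$ gives $\|Tx\|^2=2=r$.

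For $\P(3,3)$ we have $r\in\{1,2,3\}$. The $r=1$ case is again trivial. For $r=2$, choose $i\in\{1,2,3\}$ with $v_1^{-1}(i)\ne v_2^{-1}(i)$ (such $i$ exists since $v_1\ne v_2$) and columns $j_1,j_2$ with $\alpha_{j_l}=v_l$; then $x=e_{v_1^{-1}(i)}e_{j_1}^*+e_{v_2^{-1}(i)}e_{j_2}^*$ has $\|x\|=1$ (its columns are orthogonal unit vectors) and $Tx=e_i(e_{j_1}+e_{j_2})^*$ has $\|Tx\|^2=2$. For $r=3$, relabeling columns gives $b_l=e_le_l^*$, and granting the combinatorial lemma below, if $k$ satisfies $v_1^{-1}(k),v_2^{-1}(k),v_3^{-1}(k)$ all distinct, then $x=\sum_l e_{v_l^{-1}(k)}e_l^*$ satisfies $xx^*=I_3$ and $Tx=e_k(e_1+e_2+e_3)^*$, giving $\|x\|=1$ and $\|Tx\|^2=3$.

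The main obstacle is the combinatorial lemma for $r=3$: for any three distinct $v_1,v_2,v_3\in S_3$, some $k\in\{1,2,3\}$ satisfies $v_1^{-1}(k),v_2^{-1}(k),v_3^{-1}(k)$ all distinct. The set of ``bad'' $k$ for a pair $(l,l')$ is $\{k\colon v_l^{-1}(k)=v_{l'}^{-1}(k)\}=\mathrm{Fix}(v_{l'}v_l^{-1})$, whose cardinality is $0$ if $v_{l'}v_l^{-1}$ is a $3$-cycle and $1$ if it is a transposition. The three products $\pi_{l,l'}=v_{l'}v_l^{-1}$ satisfy $\pi_{1,3}=\pi_{2,3}\pi_{1,2}$, and since the product of two distinct transpositions in $S_3$ is a $3$-cycle (and the product of equal ones is the identity), the $\pi_{l,l'}$ cannot all be transpositions. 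Hence at least one bad set is empty, the union of bad sets has at most $2$ elements, and some $k\in\{1,2,3\}$ is good.
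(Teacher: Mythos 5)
Your proof is correct, and it takes a genuinely different route from the paper's. You group equal column permutations to write $T=a\odot b$ with $a$ the row of the $r$ distinct permutation unitaries and $b$ a column of mutually orthogonal diagonal projections summing to $I_n$, and then sandwich: the standard Haagerup-type bound $\|T\|_{cb}\le\|a\|\,\|b\|=\sqrt r$ from above, and an explicit test matrix built from a common ``distinguishing'' index $k$ (one with $v_1^{-1}(k),\dots,v_r^{-1}(k)$ pairwise distinct) giving $\|T\|\ge\sqrt r$ from below; the only real content is your $S_3$ lemma, and its fixed-point argument is sound, since the three ratios $v_{l'}v_l^{-1}$ cannot all be transpositions (their composition relation would force one of them to be a $3$-cycle or the identity). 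The paper instead stays inside its numerical-range framework: for $\P(2,n)$ it appeals to commutativity of the column operators ($S_2$ is abelian) via \cite[Remark~2.5]{timoney03}, and for $\P(3,3)$ it normalises $\alpha_1$ to the identity, reduces by conjugation to three cases, and verifies the intersection criterion $\Wme(\tfrac1{\sqrt3}u^*)\cap\Wme(e)\ne\emptyset$ using that the joint numerical range of $u_{(1\,2)},u_{(1\,2\,3)},u_{(1\,3)}$ contains $0$, together with Lemma~\ref{lem:conv} and \cite[Proposition~3.1]{timoney03}. Your argument is more elementary and self-contained---it uses nothing from \cite{timoney03}---and it gives the sharper conclusion $\|T\|=\|T\|_{cb}=\sqrt r$ for every such $T$; moreover, the failure of your ``common distinguishing index'' condition for the four permutations $\{\mathrm{id},(1\,2),(1\,3),(2\,3)\}$ shows exactly why your method stops at $\P(3,3)$, consistent with Example~\ref{eg:P32}. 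What the paper's route buys is coherence with the $\Wme$ machinery used throughout, and a case analysis that identifies which triples of unitaries could cause failure, which is what pointed the authors toward Example~\ref{eg:P32}.
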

\begin{proof}
  By Theorem~\ref{thm:2col}, $C_\P(2,n) \leq C(2,n)=1$, so
  $C_\P(2,n)=1$. Alternatively, since $S_2$ is abelian, this is an
  immediate consequence of \cite[Remark~2.5]{timoney03}.

  Now consider $u\odot e\in \P(3,3)$ where $u=[u_1\ u_2\ u_3]$ and
  $u_j=u_{\alpha_j}$ for some $\alpha_j\in S_3$. Observe that if $u_0$
  is a unitary matrix in~$M_m$ then the norms and completely bounded
  norms of $u\odot e$ and $u_0u\odot e$ coincide. So, taking
  $u_0=u_1^{-1}$, we may assume that $\alpha_1$ is the identity
  permutation. Similarly, conjugating each $\alpha_j$ by some
  $\alpha_0\in S_3$ will not change the norm or completely bounded
  norm of the corresponding elementary operator. Hence up to symmetry
  there are three cases to consider:
  \begin{enumerate}
  \item $\alpha_2=(1\ 2\ 3)$ and $\alpha_3=(1\ 3\ 2)=\alpha_2^{-1}$;
  \item $\alpha_2=(1\ 2)$ and $\alpha_3=(1\ 2\ 3)$; and
  \item $\alpha_2=(1\ 2)$ and $\alpha_3=(1\ 3)$.
  \end{enumerate}
  In the first case, the unitaries all commute and hence
  $\|T\|=\|T\|_{cb}$ by~\cite[Remark~2.5]{timoney03}.  In both of the
  latter two cases, \[U=\{u_ju_i^* \colon 1\leq i<j\leq 3\} = \{u_{(1\
    2)}, u_{ (1\ 2\ 3)}, u_{(1\ 3)}\}\] and the joint numerical range
  of these three unitaries contains zero, since for every $u\in U$ we
  have $\langle ue_1,e_1\rangle=0$. Hence $\Wme(\tfrac1{\sqrt3}u^*)$
  contains a positive semidefinite diagonal $3\times 3$ matrix of
  trace~1, and Lemma~\ref{lem:conv} shows that $\Wme(e)$ consists of
  all such matrices. Hence
  \[\Wme(\tfrac1{\sqrt3}u^*)\cap\Wme(e)\ne \emptyset\] and so $T=u\odot
  e$ has $\|T\|_{cb}=\|T\|$.
\end{proof}
\goodbreak

However, a more persistent search reveals that $C_\P$ is not constant.
\begin{eg}\label{eg:P32}
  If
  \[T=[u_{(1)}\ u_{(1\ 2)}\ u_{(1\ 3)}\ u_{(2\ 3)}]\odot \begin{smallvect}[\Bigg]
    e_1e_1^*\\e_2e_2^*\\e_3e_3^*\\e_4e_4^*
  \end{smallvect} \in \L_{D_4}(M_{3,4}),\] then $\|T\|=\sqrt{3}$ and
  $\|T_{2,1}\|> 1.0775\sqrt 3$. Hence
  \[C_\P(3,4)\geq \frac{\|T_{2,1}\|}{\|T\|}>1.0775.\]
\end{eg}
\begin{proof}
  We have $\|T\|\leq \sqrt 3\hsnorm{T}=\sqrt 3$ by
  Lemma~\ref{lem:norms}, and the lower bound is given by considering
  the norm one matrix $\Big[
  \begin{smallmatrix}
    1&0&0&0\\0&1&0&0\\0&0&1&0
  \end{smallmatrix}\Big]$.
  Let
  \[ x=
  \begin{bmatrix}
    -2/3 & 0 & 0& 2/3 \\
    0 & 1 & 0 & 0\\
    0 & 0 &  1/\sqrt{2} &0\\
    0 & 0 &  1/\sqrt{2} &0\\
    1/3 & 0 & 0& 2/3 \\
    -2/3 & 0 & 0& -1/3 
  \end{bmatrix}.\] Observe that $\|x\|=1$, since we can reorder the
  rows and columns to recognise it as the direct sum of two $3\times
  2$ matrices with orthonormal columns. Now \[
  T_{2,1}(x) =
  \begin{bmatrix}
    -2/3 & 1 & 1/\sqrt{2} & 2/3 \\
    0 & 0 & 0 & 0\\
    0 & 0 & 0 & 0\\
    0 & 0 & 0 & 0\\
    1/3 & 0  & 0 & -1/3\\
    -2/3 & 0 & 1/\sqrt{2} & 2/3 
  \end{bmatrix},
  \]
  and a computation with Mathematica reveals that $\|T_{2,1}(x)\|^2$
  is the largest root of $18x^3-72x^2+33x-2=0$, and hence that
  $\|T_{2,1}(x)\|>1.0775\sqrt 3$.
\end{proof}

\begin{rk}
  Numerical estimates obtained from a GNU~Octave program using the
  tracial geometric mean formula~(\ref{eq:tgm}) give an improved lower
  bound for $\|T_{2,1}\|$ for the operator~$T$ in the preceding
  example
  of $1.13\|T\|$.
\end{rk}

\begin{cor}
  $C_\P(\infty,\infty)=\infty$.
\end{cor}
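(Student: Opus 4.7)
My plan is to iterate Example~\ref{eg:P32} via tensor powers, as in Remark~\ref{rk:tensorpowers}, using that the class $\P$ is closed under tensor products.

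First I would verify this closure property. If $T \in \P(m,n)$ and $S \in \P(s,t)$ have column operators $u_{\alpha_j}$ and $u_{\beta_k}$ with $\alpha_j \in S_m$ and $\beta_k \in S_s$, then a direct calculation using the description of column operators from Remark~\ref{rk:rep} shows that $T \otimes S \in \L_{D_{nt}}(M_{ms,nt})$ has column operators $u_{\alpha_j} \otimes u_{\beta_k}$. Under the lexicographic identification $\bC^m \otimes \bC^s \cong \bC^{ms}$, each such tensor product is itself the permutation unitary for the product permutation $(i,i') \mapsto (\alpha_j(i), \beta_k(i'))$, so $T \otimes S \in \P(ms, nt)$. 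By induction, $T^{\otimes k} \in \P(m^k, n^k)$ for every $k$.

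Applying this to the map $T$ from Example~\ref{eg:P32} and combining with Remark~\ref{rk:tensorpowers} yields $T^{\otimes k} \in \P(3^k, 4^k)$ with
\[ \frac{\|T^{\otimes k}\|_{cb}}{\|T^{\otimes k}\|} \geq \left( \frac{\|T\|_{cb}}{\|T\|} \right)^k > 1.0775^k \to \infty, \]
which already shows $\sup_{m,n} C_\P(m,n) = \infty$. To promote this to the genuine infinite-dimensional statement $C_\P(\infty,\infty) = \infty$, I would mirror the assembly in Corollary~\ref{cor:answer}: identify the row and column copies of $\ell^2(\bN)$ with $\bigoplus_{k \geq 1} \bC^{3^k}$ and $\bigoplus_{k \geq 1} \bC^{4^k}$, and define a single map on $\B(\ell^2(\bN))$ whose column operators agree with those of $T^{\otimes k}$ on the $k$-th row-block and act as the identity on every other row-block. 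Each extended column operator is still a genuine permutation unitary on $\ell^2(\bN)$, so the assembled map lies in the natural extension of $\P$ to $M_{\infty,\infty}$; restriction to matrices supported in a single block shows $\|T\|_{cb} \geq \|T^{\otimes k}\|_{cb}$ for every $k$, while a routine block-diagonal/off-diagonal decomposition keeps $\|T\|$ finite.

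The main obstacle is this infinite-dimensional assembly: we need every column operator to remain a genuine permutation unitary on $\bN$ simultaneously, and the operator norm of the whole map to stay finite. Padding each finite permutation by the identity on the complementary blocks handles both issues cleanly.
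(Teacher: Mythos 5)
The first half of your argument (closure of $\P$ under tensor products and the multiplicativity $\|T^{\otimes k}\|_{cb}/\|T^{\otimes k}\|=(\|T\|_{cb}/\|T\|)^k$ applied to Example~\ref{eg:P32}) is correct and is exactly the paper's route. The gap is in your infinite-dimensional assembly. The single map you build, whose column operators act as those of $T^{\otimes k}$ on the $k$-th row/column block and as the identity elsewhere, is \emph{unbounded}: restricting to matrices supported in the $k$-th diagonal block shows its norm is at least $\|T^{\otimes k}\|=\|T\|^k=3^{k/2}\to\infty$, so no ``block-diagonal/off-diagonal decomposition'' can keep the norm finite. The reason the analogous assembly works in Corollary~\ref{cor:answer} is that there each block map is first rescaled to have norm one; that normalisation is unavailable to you here, since multiplying by a scalar destroys the property that the column operators are permutation unitaries, i.e.\ takes you out of the class $\P$.

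The fix is that no single map is needed: $C_\P(\infty,\infty)$ is a supremum of ratios $\|S\|_{cb}/\|S\|$ over the class, so it suffices to exhibit a sequence of maps in the infinite permutation class with ratios tending to infinity. For each $k$, extend $T^{\otimes k}\in\P(3^k,4^k)$ to one map $\tilde T_k$ on $M_{\infty,\infty}$ by padding each of its $4^k$ column operators with the identity on the remaining rows and taking the identity as column operator for every further column. Then $\tilde T_k$ has permutation-unitary column operators, $\tilde T_k(x)=x+\iota\bigl((T^{\otimes k}-\mathrm{id})(x_{11})\bigr)$ where $x_{11}$ is the $3^k\times 4^k$ corner of $x$, so $\|\tilde T_k\|\le\|T^{\otimes k}\|+2\le 3\|T^{\otimes k}\|$, while compression to that corner gives $\|\tilde T_k\|_{cb}\ge\|T^{\otimes k}\|_{cb}$. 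Hence the ratios are at least $\tfrac13(\|T\|_{cb}/\|T\|)^k\to\infty$. This padding-by-identity step is what the paper's one-line proof implicitly relies on when it bounds $C_\P(\infty,\infty)$ below by the finite-dimensional ratios.
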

\begin{proofc}
  Let~$T$ be the map of Example~\ref{eg:P32}. Considering the tensor
  powers $T^{\otimes k}$, we see that $T^{\otimes k}\in \P(3^k,4^k)$. Since $\hsnorm{T^{\otimes k}}=1$, we have $\|T^{\otimes k}\|=3^{k/2}=\|T\|^k$, by Lemma~\ref{lem:norms}. Hence
   \[C_\P(\infty,\infty)\geq \sup_{k\ge1} \frac{\|T^{\otimes
    k}\|_{cb}}{\|T^{\otimes k}\|} = \sup_{k\ge1} \left(\frac {\|T\|_{cb}}{\|T\|}\right)^k=
  \infty.\qedhere\]
\end{proofc}

\begin{ques}
  If~$\min\{m,n\}<\infty$, is it ever true that $1< C_\P(m,n)=C(m,n)$?
\end{ques}

Finally, we pose a question about the class of module maps whose
column operators are unitary:
\[\U(m,n)=\Big\{
[u_1\ \dots u_n] \odot
\begin{smallvect}
  e_1e_1^*\svdots e_ne_n^*
\end{smallvect}\colon u_j\in \U(M_m),\ 1\leq j\leq n\Big\}\subseteq
\L_{D_n}(M_{m,n})\] where $\U(M_m)$ is the set of unitary operators
in~$M_m$.  The examples constructed in Theorem~\ref{thm:eg} are
in~$\U(m,m^2)$, so~$C(m,m^2)=C_\U(m,m^2)$ for all $m\ge1$.

\begin{ques}
  Is $C(m,n)=C_\U(m,n)$ for all $m,n\ge1$? 
\end{ques}

\end{document}